\def\mudz{\mu\partial_z}
\def\R{\mathcal{R}}
\def\R{\mathcal{R}}
\def\S{\mathcal{S}}
\def\P{\mathcal{P}}
\def\RR{\mathbb{R}}
\def\NN{\mathbb{N}}
\def\WW{\mathbb{W}}
\def\slabwidth{Z}
\def\DLmu{\mathcal{D}}
\newtheorem{theorem}{Theorem}[section]
\newtheorem{lemma}[theorem]{Lemma}
\newtheorem{problem}[theorem]{Problem}
\newtheorem{remark}[theorem]{Remark}
\newcommand\matthias[1]{{\color{black}#1}}
\begin{document}
	
	\begin{frontmatter}


	\journal{Computers and Mathematics with Applications}

	\title{On a convergent DSA preconditioned source iteration for a DGFEM method for radiative transfer}
	\author{Olena Palii}
	\ead{o.palii@utwente.nl}
%

	\author{Matthias Schlottbom\corref{cor1}}
	\ead{m.schlottbom@utwente.nl}
	\address{Department of Applied Mathematics, University of Twente, P.O. Box 217, 7500 AE Enschede, The Netherlands}
	\cortext[cor1]{Corresponding author}

	\begin{abstract}
	We consider the numerical approximation of the radiative transfer equation using discontinuous angular and continuous spatial approximations for the even parts of the solution. The even-parity equations are solved using a diffusion synthetic accelerated source iteration. We provide a convergence analysis for the infinite-dimensional iteration as well as for its discretized counterpart. The diffusion correction is computed by a subspace correction, which leads to a convergence behavior that is robust with respect to the discretization. The proven theoretical contraction rate deteriorates for scattering dominated problems. We show numerically that the preconditioned iteration is in practice robust in the diffusion limit. Moreover, computations for the lattice problem indicate that the presented discretization does not suffer from the ray effect. The theoretical methodology is presented for plane-parallel geometries with isotropic scattering, but the approach and proofs generalize to multi-dimensional problems and more general scattering operators verbatim.
	\end{abstract}

	\begin{keyword}
 radiative transfer \sep discontinuous angular approximation \sep discrete ordinates method \sep diffusion synthetic acceleration \sep convergence rates

	\MSC 65F08 \sep  
	 65N12 \sep 
	 65N22 \sep 
	 65N30 
	\end{keyword}

	\end{frontmatter}
	

\section{Introduction}
We consider the numerical solution of the radiative transfer equation in plane parallel geometry
\begin{alignat}{2}
\mudz \phi(z,\mu) + \sigma_t(z) \phi(z,\mu) &= \frac{\sigma_s(z)}{2}\int_{-1}^1 \phi(z,\mu')d\mu' + q(z,\mu), \label{eq:rte1} 
\end{alignat}
where $0<z<\slabwidth$ and $-1<\mu<1$, and $\slabwidth$ denotes the thickness of the slab and $\mu$ is the cosine of the polar angle of a unit vector. The function $\phi(z,\mu)$ models the equilibrium distribution of some quantity, like neutrons or photons \cite{Chandrasekhar60,CaseZweifel67}. The basic physical principles embodied in \eqref{eq:rte1} are transport, which is modeled by the differential operator $\mudz$, attenuation with rate $\sigma_t$ and scattering with $\sigma_s$. Internal sources are described by the function $q$. In this work we will close the radiative transfer equation using inflow boundary conditions
\begin{align}\label{eq:rte2}
	\phi(0,\mu)=g^0(\mu)\quad \mu>0,\quad\text{and}\quad \phi(\slabwidth,\mu)=g^\slabwidth(\mu)\quad \mu<0.
\end{align}
Such transport problems arise when the full three-dimensional model posed on $\RR^2\times(0,\slabwidth)\times\S^2$ obeys certain symmetries \cite{CaseZweifel67}. It has been studied in many instance due to simpler structure compared to three dimensional problems without symmetries; while the methodology presented here directly carries over to the general case, see also the numerical examples presented below.

Classical deterministic discretization strategies are based on a semidiscretization in $\mu$. 
One class of such strategies are the $P_N$-approximations, which are spectral methods based on truncated spherical harmonics expansions \cite{Vladimirov61,LewisMiller84}, and we refer to \cite{Pitkaranta1977,ManResSta00,EggerSchlottbom12} for variational discretization strategies using this approximation.
The major advantage of $P_N$-approximations is that the scattering operator becomes diagonal. In addition, the matrix representation of the transport operator $\mudz$ is sparse. The main drawbacks of the $P_N$-method are that the variational incorporation of the inflow boundary condition introduces a dense coupling of the spherical harmonics expansion coefficients making standard $P_N$-approximations quite expensive to solve. We note, however, that a modified variational formulation of the $P_N$-equations has recently been derived that leads to sparse matrices \cite{EggerSchlottbom18}.
In any case, the success of spectral approximation techniques depends on the smoothness of the solution.
In general, the solution $\phi$ is not smooth for $\mu=0$, which is related to the inflow boundary conditions \eqref{eq:rte2}. Hence, the $P_N$-approximations will in general not converge spectrally.

A second class of semidiscretizations are discrete ordinates methods that use a quadrature rule for the discretization of the $\mu$-variable \cite{CaseZweifel67}, with analysis provided in \cite{JohnsonPitkaranta83,Asadzadeh86}.
Such methods are closely related to discontinuous Galerkin (DG) methods, see, e.g., \cite{ReedHill73,WareingMcGheeMorelPautz2001,RagusaGuermondKanschat12,KanschatRagusa14,KophaziLathouwers2015}.
While allowing for local angular resolution, the main obstruction in the use of these methods is that the scattering operator leads to dense matrices, and a direct inversion of the resulting system is not possible in realistic applications. To overcome this issue, iterative solution techniques have been proposed. 
An often used iterative technique is Richardson iteration, i.e., the source iteration \cite{MarchukLebedev86,AdamsLarsen02}, but other Krylov space methods exist \cite{WarsaWareingMorel2004}. 
The key idea in the source iterations is to decouple scattering and transport, and to exploit that the transport part can be inverted efficiently.
If $\sigma_s/\sigma_t\approx 1$, the convergence of these iterative methods is slow, and several preconditioning techniques have been proposed \cite{KanschatRagusa14,MarchukLebedev86,AdamsLarsen02,WarsaWareingMorel2004}.
Among the most used and simple preconditioners is the diffusion synthetic acceleration method (DSA), in which a diffusion problem is solved in every iteration. This is well motivated by asymptotic analysis \cite{LarsenKeller,EggerSchlottbom2014}.
While Fourier analysis can be applied to special situations \cite{MarchukLebedev86,AdamsLarsen02}, the convergence analysis is mainly open for the general case, i.e., for jumping coefficients or non-periodic boundary conditions. 
A further complication in using the DSA preconditioner is that the resulting iterative scheme might diverge if the discretization of the diffusion problem and the discrete ordinates system is not consistent \cite{AdamsLarsen02}.

The contribution of this paper is to develop discretizations that allow for local resolution of the non-smoothness of the solution, and which lead to discrete problems that can be solved efficiently by diffusion synthetic accelerated source iterations. 
Our approach builds upon an even-parity formulation of the radiative transfer equations derived from the mixed variational framework given in \cite{EggerSchlottbom12}, where $P_N$-approximations have been treated in detail.
Using the framework of $KP$-methods \cite{MarchukLebedev86}, we show that an infinite dimensional DSA preconditioned source iteration converges already for the continuous problem.
We present conforming $hp$-type approximations spaces for the semidiscretization in $\mu$, and prove quasi-best approximation properties.
In order to solve resulting linear systems, we employ a DSA preconditioned Richardson iteration, which is just the infinite dimensional iteration projected to the approximation spaces. In particular the finite dimensional iteration is guaranteed to converge for any discretization. Moreover, the inversion of the transport problem can be parallelized straightforward.
In numerical experiments, even when employing low-order approximations, we observe that the developed method does not suffer from the ray effect, which is typically observed for discrete ordinates methods \cite{LewisMiller84,Brunner05}.

The outline of the paper is as follows:
In Section~\ref{sec:prelim} we introduce basic notation and recall the relevant function spaces.
In Section~\ref{sec:weak} we introduce the even-parity equation for \eqref{eq:rte1}, show its well-posedness, and formulate an infinite dimensional DSA preconditioned source iteration, for which we show convergence.
The approximation spaces are described in Section~\ref{sec:galerkin} and well-posedness of the Galerkin problems as well as quasi-best approximation results are presented.
In Section~\ref{sec:sn} we discuss the efficient iterative solution of the resulting linear systems and provide a convergence proof for the discrete DSA scheme.
Section~\ref{sec:numerics} presents supporting numerical examples for slab geometry and for multi-dimensional problems that show the good approximation properties of the proposed methods as well as fast convergence of the iterative solver in multi-dimensional problems and in the diffusion limit. The paper ends with some conclusions in Section~\ref{sec:conclusion}.

\section{Function spaces and further preliminaries}\label{sec:prelim}
Following \cite{Agoshkov98} we denote by $L^2(\DLmu)$ with $\DLmu=(0,\slabwidth)\times(-1,1)$ the usual Hilbert space of square integrable functions with inner product
\begin{align*}
	(\phi,\psi)=\int_{-1}^1 \int_0^\slabwidth \phi(z,\mu) \psi(z,\mu) dz d\mu
\end{align*}
and induced norm $\|\phi\|_{L^2(\DLmu)}=(\phi,\phi)^{\frac{1}{2}}$.
Furthermore, we define the Hilbert space
\begin{align*}
	H^1_2(\DLmu) = \{\phi\in L^2(\DLmu): \mudz\phi\in L^2(\DLmu)\}
\end{align*}
of functions with square integrable weak derivatives with respect to the weighted differential operator $\mudz$ endowed with the corresponding graph norm.  

In order to deal with boundary data, let us introduce the Hilbert space 
$L^2_-$ that consists of measurable functions for which
\begin{align*}
	\|\psi\|_{L^2_-}^2=\int_0^1  |\psi(0,\mu)|^2 |\mu|d\mu + \int_{-1}^0 |\psi(\slabwidth,\mu)|^2|\mu| d\mu
\end{align*}
is finite, and we denote by $\langle\psi,\phi\rangle_{L^2_-}$ the corresponding inner product on $L^2_-$.
Similarly, $L^2_+$ denotes the space of outflow data.
We have the following trace lemma \cite{Agoshkov98}.
\begin{lemma}\label{lem:trace}
	If $\phi\in H^1_2(\DLmu)$, then there exist traces $\phi_{\mid\Gamma_-}\in L^2_-$ and $\phi_{\mid\Gamma_+}\in L^2_+$ and
	\begin{align*}
		\|\phi_{\mid\Gamma_\pm}\|_{L^2_{\pm}}\leq \frac{C}{\sqrt{1-e^{-\slabwidth}}} \|\phi\|_{H^1_2(\DLmu)}
	\end{align*}
	with a constant $C>0$ independent of $\phi$ and $\slabwidth$.
\end{lemma}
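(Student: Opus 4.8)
The plan is to establish the trace estimate by a standard density-and-integration-by-parts argument, carefully tracking the dependence on the slab width $\slabwidth$ through an exponential weight. First I would reduce to the case of smooth functions: for $\phi \in C^\infty(\overline{\DLmu})$ the traces $\phi_{\mid\Gamma_\pm}$ are classically defined, and the general case follows by density of smooth functions in $H^1_2(\DLmu)$ together with the a priori bound we are about to prove (the bound shows the trace operator is continuous on a dense subspace, hence extends). So the real work is the inequality for smooth $\phi$.

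For the quantitative estimate, fix $\mu \in (0,1)$ and consider the function $z \mapsto |\phi(z,\mu)|^2$ on $(0,\slabwidth)$. The key identity is
\begin{align*}
  \frac{d}{dz}\Bigl(e^{z/\mu}|\phi(z,\mu)|^2\Bigr)
  = e^{z/\mu}\Bigl(\tfrac{1}{\mu}|\phi|^2 + 2\phi\,\partial_z\phi\Bigr),
\end{align*}
which, after multiplying by $\mu$ and integrating over $z\in(0,\slabwidth)$, lets one solve for $|\phi(0,\mu)|^2$ in terms of $|\phi(\slabwidth,\mu)|^2$ and an integral of $|\phi|^2 + |\mu\partial_z\phi|\,|\phi|$ against the weight $e^{z/\mu}$. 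A cleaner route that avoids needing the outflow value: integrate the identity for $e^{(z-\slabwidth)/\mu}$ instead, or simply write $|\phi(0,\mu)|^2 (1 - e^{-\slabwidth/\mu}) \le \int_0^\slabwidth \frac{d}{dz}\bigl[(e^{(z-\slabwidth)/\mu}-1)\,(-|\phi|^2)\bigr]\,dz$-type manipulations. The upshot is a pointwise-in-$\mu$ bound of the form
\begin{align*}
  |\phi(0,\mu)|^2\,|\mu|\,(1-e^{-\slabwidth/\mu})
  \;\le\; C\int_0^\slabwidth \bigl(|\phi(z,\mu)|^2 + |\mu\partial_z\phi(z,\mu)|^2\bigr)\,dz,
\end{align*}
using Young's inequality to absorb the cross term. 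Since $1-e^{-\slabwidth/\mu} \ge 1 - e^{-\slabwidth}$ for $\mu \in (0,1]$ (the map $\mu\mapsto \slabwidth/\mu$ is at least $\slabwidth$), we may replace the $\mu$-dependent factor by the uniform constant $1-e^{-\slabwidth}$, then divide and integrate over $\mu \in (0,1)$; the analogous computation at $z=\slabwidth$ for $\mu \in (-1,0)$ handles the other half of $\|\cdot\|_{L^2_-}$. Adding the two contributions and recognizing the right-hand side as $\|\phi\|_{H^1_2(\DLmu)}^2$ gives the claim, with the factor $1/\sqrt{1-e^{-\slabwidth}}$ appearing exactly as stated. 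The estimate for $\|\phi_{\mid\Gamma_+}\|_{L^2_+}$ is identical with the roles of inflow and outflow boundaries swapped.

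The main obstacle is purely bookkeeping rather than conceptual: one must choose the exponential weight so that (i) the boundary term that is kept has a favorable sign and a factor bounded below uniformly in $\mu$, and (ii) the unwanted boundary term drops out or has the right sign, so that no control of the outflow trace is needed to bound the inflow trace. Getting the constant to be genuinely independent of $\slabwidth$ — rather than only bounded for $\slabwidth$ in compact sets — relies on the monotonicity $1 - e^{-\slabwidth/\mu} \ge 1 - e^{-\slabwidth}$ and on not introducing any further $\slabwidth$-dependence when applying Young's inequality to the cross term; this is the one place where a careless estimate would spoil the stated bound. Everything else — the density argument and the final integration in $\mu$ — is routine.
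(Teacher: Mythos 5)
The paper does not prove this lemma itself but cites it from Agoshkov; your weighted integration-by-parts argument is precisely the standard proof from that literature, and it is correct: the weight $w(z)=e^{-z/\mu}-e^{-\slabwidth/\mu}$ (vanishing at the outflow end, equal to $1-e^{-\slabwidth/\mu}$ at the inflow end, with $w'\le 0$) yields exactly your displayed pointwise bound with $C=2$, and the monotonicity $1-e^{-\slabwidth/\mu}\ge 1-e^{-\slabwidth}$ for $0<\mu\le 1$ gives the stated $\slabwidth$-uniform constant. The only slip is the very first identity with the growing weight $e^{z/\mu}$, whose retained boundary term at $z=\slabwidth$ carries the uncontrollable factor $e^{\slabwidth/\mu}$ -- but you recognize this and switch to the decaying weight, so the plan as a whole stands.
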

As a consequence of the trace lemma and the density of smooth functions in $H^1_2(\DLmu)$ the following integration-by-parts formula is true
\begin{align}\label{eq:partial_integration}
		(\mudz \phi,\psi) = -(\phi,\mudz \psi) + \langle\phi,\psi\rangle_{L^2_+}-\langle\phi,\psi\rangle_{L^2_-}.
\end{align}
Throughout the manuscript we make the following basic assumption:
\begin{itemize}
	\item[(A1)] $\sigma_s,\sigma_t\in L^\infty(0,\slabwidth)$ are non-negative and $\sigma_a=\sigma_t-\sigma_s\geq \gamma>0$.
\end{itemize}
Assumption (A1) means that we consider absorbing media, which makes \eqref{eq:rte1}-\eqref{eq:rte2} well-posed \cite{Agoshkov98}.
\begin{lemma}\label{lem:existence}
	Let assumption (A1) hold, and let $g_-\in L^2_-$ and $q\in L^2(\DLmu)$, then \eqref{eq:rte1}-\eqref{eq:rte2} has a unique solution $\phi\in H^1_2(\DLmu)$ that satisfies the a-priori bound
	\begin{align*}
		\|\phi\|_{H^1_2{(\DLmu)}} \leq C (\|g_-\|_{L^2_-}+\|q\|_{L^2(\DLmu)}).
	\end{align*}
\end{lemma}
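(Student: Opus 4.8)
The plan is to establish existence, uniqueness, and the a-priori bound by a standard transport-theoretic argument that decouples the scattering term from the streaming operator. First I would treat the scattering-free problem: given $f\in L^2(\DLmu)$ and $g_-\in L^2_-$, consider $\mudz\phi + \sigma_t\phi = f$ with inflow data $g_-$. Along characteristics (which in plane-parallel geometry are simply the lines $\mu=\text{const}$) this is an ODE in $z$ that can be integrated explicitly, and one obtains the representation $\phi(z,\mu) = e^{-\tau(z,\mu)/\mu}g_-(\cdot,\mu) + \int e^{-(\tau(z,\mu)-\tau(z',\mu))/\mu}f(z',\mu)\,dz'/\mu$ for $\mu>0$ (and the mirrored formula for $\mu<0$), where $\tau$ is the optical depth. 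From this one reads off that $\phi\in H^1_2(\DLmu)$ with a bound $\|\phi\|_{H^1_2(\DLmu)}\le C(\|g_-\|_{L^2_-}+\|f\|_{L^2(\DLmu)})$; the exponential attenuation coming from $\sigma_t$ (or, if $\sigma_t$ can vanish, from the $e^{-\slabwidth}$-type factor in Lemma~\ref{lem:trace}) makes the integral operator bounded on $L^2$. Denote this solution operator by $T\colon (f,g_-)\mapsto \phi$.

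Next I would rewrite the full problem \eqref{eq:rte1}--\eqref{eq:rte2} as a fixed-point equation. Writing $S\phi(z) = \tfrac12\int_{-1}^1\phi(z,\mu')\,d\mu'$ for the (angular-averaging) scattering operator, the equation becomes $\phi = T(\sigma_s S\phi + q, g_-)$, i.e. $(I - K)\phi = T(q,g_-)$ with $K\phi := T(\sigma_s S\phi, 0)$. The crucial estimate is that $K$ is a contraction on $L^2(\DLmu)$. Here is where assumption (A1) enters decisively: testing the streaming equation against $\phi$ and using the integration-by-parts formula \eqref{eq:partial_integration} gives $\|\sqrt{\sigma_t}\,\phi\|_{L^2(\DLmu)}^2 \le (\sigma_s S\phi,\phi) + (\text{boundary terms} \ge 0 \text{ on outflow}) + \ldots$; since $S$ is a self-adjoint projection-like operator with $\|S\phi\|\le\|\phi\|$ and $\sigma_t - \sigma_s = \sigma_a \ge \gamma>0$, one gets an a priori estimate that effectively bounds the spectral radius of $K$ by $\sup \sigma_s/\sigma_t < 1$. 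More precisely, one shows $\|\sqrt{\sigma_a}\phi\|^2 \le \langle g_-,\phi\rangle_{L^2_-} + (q,\phi)$-type bound, hence $(I-K)$ is boundedly invertible by a Neumann series (or equivalently, the symmetric bilinear form associated to the even-parity formulation is coercive).

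With $(I-K)^{-1}$ bounded on $L^2(\DLmu)$, uniqueness and existence of $\phi\in L^2(\DLmu)$ follow; applying $T$ once more, $\phi = T(\sigma_s S\phi + q, g_-)$ shows $\phi\in H^1_2(\DLmu)$, and chaining the bound for $T$ with the bound for $(I-K)^{-1}$ yields the claimed a-priori estimate $\|\phi\|_{H^1_2(\DLmu)}\le C(\|g_-\|_{L^2_-}+\|q\|_{L^2(\DLmu)})$. The main obstacle I anticipate is making the contraction/coercivity estimate fully rigorous when $\sigma_t$ is merely $L^\infty$ and possibly vanishes on part of the slab: then one cannot simply divide by $\sigma_t$, and one must instead combine the $\sigma_a\ge\gamma$ lower bound with the trace lemma's $\slabwidth$-uniform constant to control the streaming part, handling the degenerate-absorption directions with care. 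Everything else — the characteristic integration, the mapping properties of $T$, the Neumann series — is routine. (Alternatively, one can bypass the fixed-point argument entirely and invoke the general well-posedness result of Agoshkov \cite{Agoshkov98} under (A1), but the constructive route above is what I would write out.)
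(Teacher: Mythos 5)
Your argument is essentially correct, but it is worth noting that the paper does not prove Lemma~\ref{lem:existence} at all: it is quoted from the literature (the citation \cite{Agoshkov98} immediately preceding the statement), and the paper's own, independent route to well-posedness is the one sketched in the remark after Theorem~\ref{thm:solve_problem} --- solve the even-parity problem \eqref{eq:even-parity} by the Riesz representation theorem in the energy norm $\|\cdot\|_a$, then reconstruct $\phi^-=\frac{1}{\sigma_t}(q^--\mudz\phi^+)$ and recover the boundary conditions via the trace lemma. Your characteristics-plus-Neumann-series proof is the classical alternative: it is constructive, avoids the even-odd machinery entirely, and yields the contraction constant $\|\sigma_s/\sigma_t\|_\infty<1$ explicitly, which is exactly the constant that reappears in Lemma~\ref{lem:convergence_half-step}; the variational route, by contrast, gets existence ``for free'' from coercivity of $a$ and is the one that transfers verbatim to the Galerkin setting. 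One simplification you missed: the complication you anticipate at the end (that $\sigma_t$ might vanish on part of the slab) cannot occur under (A1), since $\sigma_s\geq 0$ and $\sigma_a=\sigma_t-\sigma_s\geq\gamma$ force $\sigma_t\geq\gamma>0$ almost everywhere; consequently $\|\sigma_s/\sigma_t\|_\infty\leq 1-\gamma/\|\sigma_t\|_{L^\infty}<1$ and the kernel $\frac{1}{\mu}e^{-\tau/\mu}$ of your solution operator $T$ has $z'$-integral uniformly bounded by $1/\gamma$, so the Schur test and the contraction estimate go through without any special treatment of degenerate directions. With that observation your outline closes into a complete proof.
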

Assumption (A1) is not required to prove well-posedness for bounded geometries \cite{EggerSchlottbom2014Lp}, or for slab problems with constant coefficients \cite[Thm. 2.25]{Blake16}.

Let $\P:L^2(\DLmu)\to L^2(\DLmu)$ denote the $L^2$-projection onto constants in $\mu$, i.e.,
\begin{align*}
	(\P\psi)(z,\mu) = \frac{1}{2}\int_{-1}^1 \psi(z,\mu') d\mu'.
\end{align*}
Since $\sigma_t\in L^\infty(0,\slabwidth)$ is strictly positive, 
we can define the norms on $L^2(\DLmu)$ as follows
\begin{align}
	\|\psi\|_{\sigma_t}^2 = (\sigma_t \psi,\psi)\quad\text{and}\quad \|\psi\|_{\frac{1}{\sigma_t}}^2 = (\frac{1}{\sigma_t} \psi,\psi).
\end{align}
\subsection{Even-odd splitting}
The even $\phi^+$ and odd $\phi^-$ parts of a function $\phi\in L^2(\DLmu)$ are defined as
\begin{align*}
	\phi^+(z,\mu)=\frac{1}{2}( \phi(z,\mu)+\phi(z,-\mu)),\qquad 	\phi^-(z,\mu)=\frac{1}{2}( \phi(z,\mu)-\phi(z,-\mu)).
\end{align*}
Even-odd decompositions are frequently used in transport theory \cite{Pitkaranta1977,Vladimirov61}.
Since, as functions of $\mu$, even and odd function are orthogonal in $L^2(-1,1)$, we can decompose $L^2(\DLmu)$ into orthogonal subspaces containing even and odd functions, respectively,
\begin{align*}
	L^2(\DLmu) = L^2(\DLmu)^+ \oplus L^2(\DLmu)^-.
\end{align*}
Similarly, we will write $H^1_2(\DLmu)^\pm = H^1_2(\DLmu)\cap L^2(\DLmu)^\pm$.
As in \cite{EggerSchlottbom12}, we observe that $\mudz\phi^\pm \in L^2(\DLmu)^{\mp}$ for any $\phi\in H^1_2(\DLmu)$, and $\P\phi^\pm\in L^2(\DLmu)^+$ for $\phi\in L^2(\DLmu)$.
It turns out that the natural space for our formulation is
\begin{align*}
	\WW = H^1_2(\DLmu)^+ \oplus L^2(\DLmu)^-.
\end{align*}
%
\section{Weak formulation of the slab problem}\label{sec:weak}
\subsection{Derivation}
We follow the steps presented in \cite{EggerSchlottbom12} for multi-dimensional problems. The key idea is to rewrite the slab problem into a weak formulation for the even and odd parts of the solution. Multiplication of \eqref{eq:rte1} with a test function $\psi\in\WW^+$ and using orthogonality of even and odd functions gives that
\begin{align*}
	(\mudz\phi^-,\psi^+)+((\sigma_t-\sigma_s\P)\phi^+,\psi^+)=(q^+,\psi^+).
\end{align*}
Integration-by-parts \eqref{eq:partial_integration} applied to the first term on the left-hand side yields that
\begin{align*}
	(\mudz \phi^-,\psi^+) = -(\phi^-,\mudz \psi^+) + 
	\langle\phi^-,\psi^+\rangle_{L^2_+}-\langle\phi^-,\psi^+\rangle_{L^2_-}.
\end{align*}
Due to symmetries, we have that $\langle\phi^-,\psi^+\rangle_{L^2_+}=-\langle\phi^-,\psi^+\rangle_{L^2_-}$.
Using \eqref{eq:rte2}, we have that $\phi^-=\phi-\phi^+=g-\phi^+$ on the inflow boundary, which leads to
\begin{align*}
	(\mudz \phi^-,\psi^+) = -(\phi^-,\mudz \psi^+) + 
	2\langle\phi^+,\psi^+\rangle_{L^2_-}-2\langle g,\psi^+\rangle_{L^2_-}.
\end{align*}
Thus, for any $\psi^+\in\WW^+$, it holds that
\begin{align}\label{eq:weak1}
 	2\langle\phi^+,\psi^+\rangle_{L^2_-}-(\phi^-,\mudz \psi^+) + ((\sigma_t-\sigma_s\P) \phi^+,\psi^+)=(q^+,\psi^+)+2\langle g_-,\psi^+\rangle_{L^2_-}.
\end{align}
Testing \eqref{eq:rte1} with an odd test function $\psi^-\in \WW^-$, we obtain that
\begin{align*}
	(\mudz \phi^+,\psi^-)+(\sigma_t \phi^-,\psi^-)=(q^-,\psi^-),
\end{align*}
which implies that $\phi^-= \frac{1}{\sigma_t}(q^- - \mudz\phi^+)\in \WW^-$. Using this expression for $\phi^-$ in \eqref{eq:weak1}, we deduce that $u=\phi^+$ is a solution to the following problem; cf. \cite{EggerSchlottbom12}.
\begin{problem}\label{prob:cont}
 Let $q\in L^2(\DLmu)$ and $g\in L^2_-$. Find $u \in \WW^+$ such that 
\begin{align}\label{eq:even-parity}
	a(u,v)=\ell(v)\quad\text{for all } v\in\WW^+.
\end{align}
where the bilinear form $a:\WW^+\times\WW^+\to\RR$ is given by
\begin{align}\label{eq:def_a}
	a(u,v)&=2\langle u,v\rangle_{L^2_-}+(\frac{1}{\sigma_t}\mudz u,\mudz v)+((\sigma_t-\sigma_s\P) u,v),
\end{align}
and the linear form $\ell:\WW^+\to\RR$ is defined as
 \begin{align}\label{eq:def_RHS}
	\ell(v)=(q^+,v) + 2\langle g_-,\psi^+\rangle_{L^2_-}+(q^-,\frac{1}{\sigma_t}\mudz v).
 \end{align}
\end{problem}

\subsection{Well-posedness}
We endow $\WW^+$ with the norm induced by the bilinear form $a$ defined \eqref{eq:def_a}, i.e.,
\begin{align}\label{eq:energy_norm}
	\|u\|_\WW=\|u\|_a=a(u,u)^{\frac{1}{2}} \quad\text{for } u\in\WW^+.
\end{align}
Using the Cauchy-Schwarz inequality, we obtain the following result.
\begin{lemma}
	The linear form $\ell:\WW\to\RR$ defined in \eqref{eq:def_RHS} is bounded, i.e., for all $v\in\WW^+$ it holds
	\begin{align*}
		\ell(v)\leq (\|q^+\|_{\frac{1}{\sigma_a}}^2+\|q^-\|_{\frac{1}{\sigma_t}}^2+2\|g_-\|_{L^2_-}^2)^{\frac{1}{2}} \|v\|_a.
	\end{align*}
\end{lemma}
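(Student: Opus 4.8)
The plan is to bound each of the three terms in $\ell(v)$ from \eqref{eq:def_RHS} separately by a quantity of the form (constant)$\,\times\,\|v\|_a$, and then combine them using the Cauchy--Schwarz inequality in $\RR^3$. The crucial point is to recognize that the energy norm $\|v\|_a^2 = 2\langle v,v\rangle_{L^2_-} + \|\tfrac1{\sigma_t}\mudz v\|_{\sigma_t}^2 + ((\sigma_t-\sigma_s\P)v,v)$ controls all the quantities that naturally appear when estimating $\ell(v)$, and in particular that $((\sigma_t-\sigma_s\P)v,v) \geq \|v\|_{\sigma_a}^2$. For the latter inequality I would use that $\P$ is an orthogonal projection, so $(\sigma_s\P v, v) = (\sigma_s \P v,\P v)\le (\sigma_s v,v)$ when $\sigma_s\ge0$ — wait, more carefully: writing $(\sigma_t - \sigma_s\P)v = (\sigma_t-\sigma_s)v + \sigma_s(v - \P v)$, one gets $((\sigma_t-\sigma_s\P)v,v) = (\sigma_a v,v) + (\sigma_s(v-\P v),v) = \|v\|_{\sigma_a}^2 + (\sigma_s(I-\P)v,(I-\P)v) \geq \|v\|_{\sigma_a}^2$, using that $\P$ is self-adjoint, idempotent, and commutes with multiplication by the $z$-dependent function $\sigma_s$, together with $\sigma_s\ge0$.

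First I would estimate the first term: by Cauchy--Schwarz on the inner product $\langle\cdot,\cdot\rangle_{L^2_-}$,
\begin{align*}
	2\langle g_-,v\rangle_{L^2_-} \leq 2\|g_-\|_{L^2_-}\|v\|_{L^2_-} = \sqrt{2}\,\|g_-\|_{L^2_-}\cdot\sqrt{2}\,\|v\|_{L^2_-} \leq \sqrt{2}\,\|g_-\|_{L^2_-}\,\|v\|_a,
\end{align*}
since $2\|v\|_{L^2_-}^2 = 2\langle v,v\rangle_{L^2_-} \leq \|v\|_a^2$. Next, for the second term $(q^+,v)$: I would insert the weight $\sigma_a$ and use Cauchy--Schwarz with respect to the $L^2$-inner product weighted by $\sigma_a$ and $1/\sigma_a$ (both well-defined and finite by (A1), which gives $\sigma_a\ge\gamma>0$, and by $\sigma_a\le\sigma_t\in L^\infty$):
\begin{align*}
	(q^+,v) = (\tfrac{1}{\sqrt{\sigma_a}}q^+,\sqrt{\sigma_a}\,v) \leq \|q^+\|_{\frac1{\sigma_a}}\,\|v\|_{\sigma_a} \leq \|q^+\|_{\frac1{\sigma_a}}\,\|v\|_a,
\end{align*}
where the last step uses $\|v\|_{\sigma_a}^2 \leq ((\sigma_t-\sigma_s\P)v,v) \leq \|v\|_a^2$ from the observation above. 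Finally, for the third term $(q^-,\tfrac1{\sigma_t}\mudz v)$, I would similarly write it as $(\tfrac1{\sqrt{\sigma_t}}q^-,\tfrac1{\sqrt{\sigma_t}}\mudz v)$ and apply Cauchy--Schwarz to obtain $\|q^-\|_{\frac1{\sigma_t}}\,\|\tfrac1{\sigma_t}\mudz v\|_{\sigma_t} = \|q^-\|_{\frac1{\sigma_t}}\cdot(\tfrac1{\sigma_t}\mudz v,\mudz v)^{1/2} \leq \|q^-\|_{\frac1{\sigma_t}}\,\|v\|_a$.

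Adding the three bounds gives $\ell(v) \leq \big(\|q^+\|_{\frac1{\sigma_a}} + \|q^-\|_{\frac1{\sigma_t}} + \sqrt2\,\|g_-\|_{L^2_-}\big)\|v\|_a$, and then a discrete Cauchy--Schwarz inequality $\alpha_1+\alpha_2+\alpha_3 \leq \sqrt3\,(\alpha_1^2+\alpha_2^2+\alpha_3^2)^{1/2}$ would yield a bound of the claimed shape but with an extra $\sqrt3$; to get exactly the stated constant $(\|q^+\|_{\frac1{\sigma_a}}^2+\|q^-\|_{\frac1{\sigma_t}}^2+2\|g_-\|_{L^2_-}^2)^{1/2}$ one should instead group the estimate more cleverly — namely bound $\ell(v)$ directly by a single Cauchy--Schwarz in a product space, treating $(\tfrac1{\sqrt{\sigma_a}}q^+, \tfrac1{\sqrt{\sigma_t}}q^-, \sqrt2\,g_-)$ against $(\sqrt{\sigma_a}\,v, \tfrac1{\sqrt{\sigma_t}}\mudz v, \sqrt2\,v_{\mid\Gamma_-})$ in $L^2(\D)^+\times L^2(\D)^-\times L^2_-$, whose second factor has squared norm exactly $\|v\|_{\sigma_a}^2 + \|\tfrac1{\sigma_t}\mudz v\|_{\sigma_t}^2 + 2\|v\|_{L^2_-}^2 \leq \|v\|_a^2$. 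The only genuine subtlety — the part I would be most careful about — is the inequality $((\sigma_t-\sigma_s\P)u,u)\geq\|u\|_{\sigma_a}^2$; once that is in hand, everything else is a routine application of Cauchy--Schwarz, and the trace lemma (Lemma~\ref{lem:trace}) is not even needed here since the inflow-trace term is already part of the energy norm.
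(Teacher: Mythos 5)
Your proposal is correct and follows exactly the route the paper intends: the paper gives no written proof beyond the remark ``Using the Cauchy--Schwarz inequality,'' and your term-by-term weighted Cauchy--Schwarz estimates, combined via the vector Cauchy--Schwarz inequality to avoid the spurious $\sqrt{3}$, together with the key observation $((\sigma_t-\sigma_s\P)v,v)=\|v\|_{\sigma_a}^2+\|(I-\P)v\|_{\sigma_s}^2\geq\|v\|_{\sigma_a}^2$, are precisely the details being suppressed. Nothing is missing.
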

Since the space $\WW^+$ endowed with the inner product induced by $a$ is a Hilbert space, the unique solvability of Problem~\ref{prob:cont} is a direct consequence of the Riesz representation theorem.
\begin{theorem}\label{thm:solve_problem}
	Let Assumption (A1) hold true. Then Problem~\ref{prob:cont} has a unique solution $u\in\WW^+$. Moreover, we have the bound
	\begin{align*}
		\|u\|_{a} \leq (\|q^+\|_{\frac{1}{\sigma_a}}^2+\|q^-\|_{\frac{1}{\sigma_t}}^2+2\|g_-\|_{L^2_-}^2)^{\frac{1}{2}}.
	\end{align*}
\end{theorem}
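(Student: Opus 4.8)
The plan is to apply the Lax–Milgram / Riesz framework, but since the bilinear form $a$ is symmetric, it suffices to check that $a$ is an inner product on $\WW^+$ inducing a norm equivalent to a Hilbert space structure, and then invoke the Riesz representation theorem together with the boundedness of $\ell$ from the preceding lemma. Concretely, I would proceed in three steps.

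First, I would verify that $a$ is indeed an inner product on $\WW^+$, i.e.\ that it is symmetric, bilinear, and positive definite. Symmetry and bilinearity are immediate from \eqref{eq:def_a}. For positive definiteness, note that $a(u,u) = 2\|u_{\mid\Gamma_-}\|_{L^2_-}^2 + \|\mudz u\|_{1/\sigma_t}^2 + ((\sigma_t - \sigma_s\P)u, u)$. The first two terms are nonnegative. For the last term, using that $\P$ is an orthogonal projection with $\|\P u\| \le \|u\|$ and that $\P u = u$ on the even part where it acts nontrivially, one writes $((\sigma_t - \sigma_s\P)u,u) = (\sigma_a u, u) + (\sigma_s(u - \P u), u) = (\sigma_a u, u) + (\sigma_s(I-\P)u, (I-\P)u) \ge \gamma \|u\|_{L^2(\DLmu)}^2 \ge 0$ by (A1), using that $(I-\P)$ is also an orthogonal projection and $\sigma_s \ge 0$. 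Hence $a(u,u) \ge \gamma\|u\|_{L^2(\DLmu)}^2$, so $a(u,u) = 0$ forces $u = 0$; combined with $\|\mudz u\|_{1/\sigma_t}^2$ controlling $\|\mudz u\|_{L^2(\DLmu)}^2$ from below (since $1/\sigma_t$ is bounded below by $1/\|\sigma_t\|_{L^\infty}$), the norm $\|\cdot\|_a$ controls the graph norm on $\WW^+ = H^1_2(\DLmu)^+$, so $(\WW^+, a)$ is a normed space.

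Second, I would argue that $(\WW^+, a(\cdot,\cdot))$ is complete, hence a Hilbert space. This follows because $\|\cdot\|_a$ is equivalent to the natural norm on $\WW^+$: the lower bound $\|u\|_a^2 \gtrsim \|u\|_{H^1_2(\DLmu)}^2$ was just sketched, and the upper bound $\|u\|_a^2 \lesssim \|u\|_{H^1_2(\DLmu)}^2$ follows from $\sigma_t, \sigma_s \in L^\infty$, $\|\P\| \le 1$, and the trace Lemma~\ref{lem:trace} to bound $\|u_{\mid\Gamma_-}\|_{L^2_-}^2$ by $\|u\|_{H^1_2(\DLmu)}^2$. Since $H^1_2(\DLmu)^+$ is a closed subspace of the Hilbert space $H^1_2(\DLmu)$, it is complete, and equivalence of norms transfers completeness to $(\WW^+, \|\cdot\|_a)$.

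Third, with $\WW^+$ a Hilbert space under $a$ and $\ell$ bounded by the preceding lemma with operator norm at most $(\|q^+\|_{1/\sigma_a}^2 + \|q^-\|_{1/\sigma_t}^2 + 2\|g_-\|_{L^2_-}^2)^{1/2}$, the Riesz representation theorem yields a unique $u \in \WW^+$ with $a(u,v) = \ell(v)$ for all $v$, and testing with $v = u$ gives $\|u\|_a^2 = \ell(u) \le \|\ell\|_{(\WW^+)'}\|u\|_a$, whence the stated a priori bound after dividing by $\|u\|_a$ (the case $u=0$ being trivial). The main obstacle, such as it is, is the coercivity/positive-definiteness bookkeeping in the first step — specifically handling the scattering term $-\sigma_s\P$ correctly by exploiting that $\P$ and $I-\P$ are orthogonal projections so that the indefinite-looking term reorganizes into $(\sigma_a u,u) + (\sigma_s(I-\P)u,u) \ge 0$; everything else is a routine application of Riesz representation and norm equivalence.
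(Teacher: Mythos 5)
Your proof is correct and follows essentially the same route as the paper: the paper also equips $\WW^+$ with the inner product induced by $a$, invokes the Riesz representation theorem, and obtains the a priori bound from the boundedness of $\ell$ established in the preceding lemma. You merely fill in the details the paper leaves implicit (positive definiteness of $a$ via the decomposition $\sigma_t-\sigma_s\P=\sigma_a+\sigma_s(I-\P)$ and completeness via equivalence with the graph norm), and these details are handled correctly.
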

\begin{remark}
	Setting $\phi^+=u$ and $\phi^-=\frac{1}{\sigma_t}(q^--\mudz u)$, one can show that $\mudz\phi^-\in L^2(\DLmu)$, i.e., $\phi=\phi^++\phi^-\in H^1_2(\DLmu)$ satisfies \eqref{eq:rte1} in $L^2(\DLmu)$, cf. \cite{EggerSchlottbom12}. Using the trace lemma~\ref{lem:trace} and partial-integration \eqref{eq:partial_integration}, we further can show that $\phi$ satisfies the boundary conditions \eqref{eq:rte2} in the sense of traces. Hence, Theorem~\ref{thm:solve_problem}, independently, leads to a well-posedness result as Lemma~\ref{lem:existence} for \eqref{eq:rte1}-\eqref{eq:rte2}.
\end{remark}

\subsection{DSA preconditioned source iteration in second order form}\label{sec:dsa_source}
As a preparation for the numerical solution of the discrete systems that will be described below, let us discuss an iterative scheme in infinite dimensions for solving the radiative transfer equation. The basic idea is a standard one and consists of decoupling scattering and transport in order to compute successive approximation, viz., the source iteration \cite{MarchukLebedev86,AdamsLarsen02}.
Next to the basic iteration, we describe a preconditioner which resembles diffusion synthetic acceleration (DSA) schemes using the notation of \cite{AdamsLarsen02} or a $KP_1$ scheme using the terminology of \cite{MarchukLebedev86}.

In order to formulate the method, we introduce the following bilinear forms
\begin{align*}
	k(u,v)&=(\sigma_s \P u,v)\text{ and } b(u,v)=a(u,v)+k(u,v)\quad\text{for }u,v,\in\WW^+,
\end{align*}
and denote the induced semi-norm and norm by $\|u\|_k$ and $\|u\|_b$, respectively.

The iteration scheme is defined as follows:
For $u^k\in\WW^+$ given, compute $u^{k+\frac{1}{2}}\in\WW^+$ as the unique solution to
\begin{align}\label{eq:source_iteration_ep}
	b(u^{k+\frac{1}{2}},v)=k(u^k,v) + \ell(v)\quad\text{for all } v\in \WW^+.
\end{align}
The half-step error $e^{k+\frac{1}{2}}=u-u^{k+\frac{1}{2}}$ satisfies
\begin{align}\label{eq:half-step_error}
	a(e^{k+\frac{1}{2}},v)=k(u^{k+\frac{1}{2}}-u^k,v)\quad\text{for all } v\in\WW^+.
\end{align}
The key idea is then to construct an easy-to-compute approximation to $e^{k+\frac{1}{2}}$ by Galerkin projection onto a suitable subspace $\WW^+_1\subset\WW^+$. This approximation is then used to correct $u^{k+\frac{1}{2}}$ to obtain a more accurate approximation $u^{k+1}$ to $u$.
Define the following closed subspace of $\WW^+$
\begin{align}\label{eq:subspace_def}
	\WW^+_1 = \{ u\in\WW^+: u=\P u\},
\end{align}
i.e., $\WW^+_1$ consists of functions in $\WW^+$ that do not depend on $\mu$.
The correction $u^{k+\frac{1}{2}}_D\in\WW_1^+$ is then computed by Galerkin projection of \eqref{eq:half-step_error} to $\WW_1^+$:
\begin{align}\label{eq:DSA_update}
	a(u_D^{k+\frac{1}{2}},v)=k(u^{k+\frac{1}{2}}-u^k,v)\quad\text{for all } v\in\WW^+_1,
\end{align}
and the new iterate is defined as
\begin{align}\label{eq:update_with_DSA}
	u^{k+1}=u^{k+\frac{1}{2}}+u_D^{k+\frac{1}{2}}.
\end{align}
If $u_D^{k+\frac{1}{2}}$ is a good approximation to $e^{k+\frac{1}{2}}$, then $e^{k+1}=e^{k+\frac{1}{2}}-u_D^{k+\frac{1}{2}}$ is small.
\matthias{The convergence proof of the iteration $\WW^+\to\WW^+$, $u^k\mapsto u^{k+1}$ is based on spectral analysis, see, e.g., \cite{Helmberg69,Werner00}.
\begin{lemma}\label{lem:convergence_half-step}
	The half-step error $e^{k+\frac{1}{2}}=u-u^{k+1/2}$ of the iteration \eqref{eq:source_iteration_ep} satisfies
	\begin{align*}
		\|e^{k+\frac{1}{2}}\|_a \leq c \|e^k\|_a,
	\end{align*}
	with constant $c=\|{\sigma_s}/{\sigma_t}\|_\infty$.
\end{lemma}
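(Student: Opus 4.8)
The plan is to recast the iteration \eqref{eq:source_iteration_ep} as the action of a single self-adjoint operator on the Hilbert space $(\WW^+,a)$ and to estimate its norm with the spectral theorem. First, since $a(u,v)=\ell(v)$ and $b=a+k$, the exact solution obeys $b(u,v)=k(u,v)+\ell(v)$ for all $v\in\WW^+$; subtracting this from \eqref{eq:source_iteration_ep} gives the error identity
\[
 b(e^{k+\frac12},v)=k(e^k,v)\qquad\text{for all }v\in\WW^+ .
\]
The form $k$ is symmetric, nonnegative, and bounded on $(\WW^+,a)$ (boundedness follows from (A1), which gives $\|u\|_{L^2(\DLmu)}^2\le\gamma^{-1}\|u\|_a^2$, hence $|k(u,v)|\le\gamma^{-1}\|\sigma_s\|_\infty\|u\|_a\|v\|_a$). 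So the Riesz representation theorem yields a bounded, self-adjoint, positive semi-definite $\mathcal K\colon\WW^+\to\WW^+$ with $a(\mathcal Ku,v)=k(u,v)$; then $b(u,v)=a((I+\mathcal K)u,v)$, $(I+\mathcal K)$ is boundedly invertible because $\mathcal K\ge 0$, and the error identity becomes $e^{k+\frac12}=(I+\mathcal K)^{-1}\mathcal K\,e^k=g(\mathcal K)\,e^k$ with $g(t)=t/(1+t)$.

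Next I would invoke the spectral theorem. Since $\mathcal K$ is self-adjoint with $\sigma(\mathcal K)\subset[0,\|\mathcal K\|_a]$ and $g$ is continuous and strictly increasing on $[0,\infty)$, spectral mapping gives $\|g(\mathcal K)\|_a=g(\|\mathcal K\|_a)$, and using monotonicity of $g$ once more together with $\|\mathcal K\|_a=\sup_{u\neq0}k(u,u)/a(u,u)$ this can be rewritten as
\[
 \|e^{k+\frac12}\|_a\le\Big(\sup_{0\neq u\in\WW^+}\frac{k(u,u)}{b(u,u)}\Big)\,\|e^k\|_a .
\]
It thus remains to prove the single estimate $k(u,u)\le c\,b(u,u)$ for all $u\in\WW^+$, with $c=\|\sigma_s/\sigma_t\|_\infty$.

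This last estimate is the conceptual heart of the argument and the only place where the structure of the problem enters. Since $\sigma_s=\sigma_s(z)$ commutes with the $\mu$-averaging projection $\P$ and $\P$ is an orthogonal $L^2(\DLmu)$-projection, we have $k(u,u)=(\sigma_s\P u,\P u)$. Using $\sigma_s(z)\le c\,\sigma_t(z)$ pointwise and that $\P$ is a contraction in the $\sigma_t$-weighted norm (a Cauchy--Schwarz/Jensen inequality in $\mu$, valid because $\sigma_t$ does not depend on $\mu$), we get $k(u,u)\le c\,(\sigma_t\P u,\P u)\le c\,\|u\|_{\sigma_t}^2$. On the other hand, writing $b=a+k$ explicitly and discarding the nonnegative boundary term $2\langle u,u\rangle_{L^2_-}$ and streaming term $(\frac{1}{\sigma_t}\mudz u,\mudz u)$ gives $b(u,u)\ge(\sigma_t u,u)=\|u\|_{\sigma_t}^2$, since $((\sigma_t-\sigma_s\P)u,u)+(\sigma_s\P u,u)=(\sigma_t u,u)$. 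Combining the two bounds yields $k(u,u)\le c\,b(u,u)$ and hence $\|e^{k+\frac12}\|_a\le c\,\|e^k\|_a$. I expect the step requiring the most care to be the spectral-theorem part, i.e.\ making rigorous the passage from $e^{k+\frac12}=g(\mathcal K)e^k$ to the displayed bound in terms of $\sup_u k(u,u)/b(u,u)$ (including the reduction $\|g(\mathcal K)\|_a=g(\|\mathcal K\|_a)$); the coefficient estimate itself, once the commutation property of $\P$ is noted, is short. A more elementary route (testing the error identity with $e^{k+\frac12}$ and using Cauchy--Schwarz for the semi-inner product $k$) also proves convergence but produces a worse contraction constant, so the operator-theoretic route seems necessary to reach exactly $c=\|\sigma_s/\sigma_t\|_\infty$.
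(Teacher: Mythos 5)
Your proof is correct and follows essentially the same route as the paper: both recast the half-step error map as a bounded self-adjoint operator, apply spectral calculus, and reduce everything to the Rayleigh-quotient estimate $k(v,v)\le c\, b(v,v)$ with $b(v,v)\ge(\sigma_t v,v)$. The only difference is cosmetic --- the paper defines its operator $K$ via the $b$-inner product, so that $e^{k+\frac12}=Ke^k$ with $\sigma(K)\subset[0,c]$, whereas you define $\mathcal K$ via the $a$-inner product and recover the same operator as $g(\mathcal K)=(I+\mathcal K)^{-1}\mathcal K$.
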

\begin{proof}
We endow $\WW^+$ with the inner product induced by the bilinear $b$ in this proof, and define bounded, self-adjoint and positive operators $A$ and $K$ on $\WW^+$ by
\begin{align*}
	b(Au,v)=a(u,v),\quad b(Ku,v)=k(u,v) \quad u,v\in\WW^+.
\end{align*}
Using $a=b-k$, we obtain that $A=I-K$. Using $u^{k+1/2}-u^k=e^k-e^{k+1/2}$, \eqref{eq:half-step_error} can be written as
\begin{align*}
	e^{k+1/2}= Ke^k.
\end{align*}
We thus have that
\begin{align*}
	\|e^{k+1/2}\|_a^2 = b((I-K)Ke^k,Ke^k)=b(K^2 (I-K)^{1/2}e^k, (I-K)^{1/2}e^k)&\leq \max\sigma(K)^2 \|(I-K)^{1/2}e^k\|_b^2 \\
	&\leq c^2\|e^k\|_a^2.
\end{align*}
In the last step we have used the following bounds on the numerical range
\begin{align*}
	0\leq b(Kv,v) = k(v,v) \leq \|\frac{\sigma_s}{\sigma_t}\|_\infty (\sigma_tv,v)\leq c b(v,v),
\end{align*}
which yields the spectral bounds $\sigma(K)\subset[0,c]$.
\end{proof}
}
\begin{theorem}\label{thm:conv_DSA_SI}
	Let Assumption (A1) hold, and let $c=\|{\sigma_s}/{\sigma_t}\|_\infty< 1$ be as in Lemma~\ref{lem:convergence_half-step}.  For any $u^0\in\WW^+$, the iteration defined by \eqref{eq:source_iteration_ep}, \eqref{eq:DSA_update}, \eqref{eq:update_with_DSA} converges linearly to the solution $u$ of Problem~\ref{prob:cont} with
	\begin{align*}
		\|u-u^{k+1}\|_a \leq \matthias{c}\|u-u^k\|_a.
	\end{align*}
\end{theorem}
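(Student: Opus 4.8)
The plan is to factor one full step $u^k\mapsto u^{k+1}$ as the half-step map from \eqref{eq:source_iteration_ep}, whose contraction factor $c$ is already supplied by Lemma~\ref{lem:convergence_half-step}, followed by the DSA correction, which I would identify as an $a$-orthogonal projection and hence as non-expansive in $\|\cdot\|_a$. The estimate then falls out by composing the two.

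First I would check that every substep is well defined. Coercivity of $b=a+k$ on $\WW^+$ is immediate since $k$ is positive semidefinite, $b(v,v)\ge a(v,v)=\|v\|_a^2$; boundedness follows from the numerical-range bound used in the proof of Lemma~\ref{lem:convergence_half-step}, namely $k(v,v)\le c\,b(v,v)$, which rearranges to $b(v,v)\le (1-c)^{-1}\|v\|_a^2$. Since $c<1$, the form $b$ is thus bounded and coercive on the Hilbert space $(\WW^+,a)$, so \eqref{eq:source_iteration_ep} has a unique solution $u^{k+1/2}$ by the Lax--Milgram/Riesz theorem; likewise $a$ is bounded and coercive on the closed subspace $\WW^+_1$, so \eqref{eq:DSA_update} has a unique solution $u_D^{k+1/2}\in\WW^+_1$.

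The crucial observation is that \eqref{eq:half-step_error} holds for \emph{all} $v\in\WW^+$, in particular for $v\in\WW^+_1\subset\WW^+$; subtracting \eqref{eq:DSA_update} from it gives
\begin{align*}
	a\bigl(e^{k+\frac12}-u_D^{k+\frac12},v\bigr)=0\qquad\text{for all }v\in\WW^+_1 .
\end{align*}
Together with $u_D^{k+\frac12}\in\WW^+_1$ this says precisely that $u_D^{k+\frac12}=\Pi_1 e^{k+\frac12}$, where $\Pi_1\colon\WW^+\to\WW^+_1$ denotes the orthogonal projection with respect to the inner product $a$ from \eqref{eq:def_a} (well defined because $\WW^+_1$ is closed in $(\WW^+,a)$). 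Hence, by \eqref{eq:update_with_DSA},
\begin{align*}
	e^{k+1}=u-u^{k+1}=e^{k+\frac12}-u_D^{k+\frac12}=(I-\Pi_1)e^{k+\frac12},
\end{align*}
and since $I-\Pi_1$ is an $a$-orthogonal projection it is non-expansive, i.e. $\|e^{k+1}\|_a\le\|e^{k+\frac12}\|_a$.

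Combining this with Lemma~\ref{lem:convergence_half-step} yields $\|e^{k+1}\|_a\le\|e^{k+\frac12}\|_a\le c\,\|e^k\|_a$, which is exactly the asserted estimate; iterating and using $c<1$ gives $\|u-u^k\|_a\le c^k\|u-u^0\|_a\to 0$, so the scheme converges linearly to the solution $u$ of Problem~\ref{prob:cont}. I do not anticipate a genuine obstacle here: the only points needing care are the direction of the inclusion $\WW^+_1\subset\WW^+$ (so that the full-space identity \eqref{eq:half-step_error} may be tested against subspace functions) and the recognition that, because the correction in \eqref{eq:DSA_update} uses the \emph{same} bilinear form $a$ that governs the half-step error in \eqref{eq:half-step_error}, the DSA update is an energy-orthogonal projection rather than merely some bounded correction — which is precisely what keeps the contraction factor of the full step no worse than that of the bare half-step.
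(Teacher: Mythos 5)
Your proof is correct and takes essentially the same route as the paper's: you identify $u_D^{k+\frac12}$ as the $a$-orthogonal projection of $e^{k+\frac12}$ onto $\WW^+_1$, conclude that the correction step is non-expansive in $\|\cdot\|_a$, and compose with the contraction factor $c$ from Lemma~\ref{lem:convergence_half-step}. The extra well-posedness checks for the substeps are sound but are simply taken for granted in the paper's two-line argument.
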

\begin{proof}
	Since $u_D^{k+1}$ is the orthogonal projection of $e^{k+\frac{1}{2}}$ to $\WW^+_1$ in the $a$-inner product it holds that
	\begin{align*}
		\|e^{k+1}\|_a = \|e^{k+\frac{1}{2}}-u_D^{k+1}\|_a = \inf_{v\in\WW^+_1} \|e^{k+\frac{1}{2}}-v\|_a\leq \|e^{k+\frac{1}{2}}\|_a.
	\end{align*}
	The assertion then follows from Lemma~\ref{lem:convergence_half-step}.
\end{proof}
\begin{remark}
	The convergence analysis presented in this section carries over verbatim to multi-dimensional problems without symmetries, and it can be extended immediately to more general (symmetric and positive) scattering operators. \matthias{Moreover, if a Poincar\'e-Friedrichs inequality is available, cf., e.g., \cite{ManResSta00}, then the case $\sigma_a\geq 0$ can be treated similarly as long as $\sigma_t$ is uniformly bounded away from $0$, and the source iteration converges also in this situation.}
\end{remark}
\begin{remark}\label{rem:diffusion}
	Problem \eqref{eq:DSA_update} is the weak formulation of the diffusion equation
	\begin{align*}
		-\partial_z (\frac{1}{3\sigma_t} \partial_z u_D) + \sigma_a u_D &=  f\quad\text{in } (0,\slabwidth),
	\end{align*}
	with $f = \sigma_s \P(u^{k+\frac{1}{2}}-u^k)$, complemented by Robin boundary conditions, which shows the close relationship to DSA schemes.
\end{remark}
\begin{remark}\label{rem:alternative}
	The convergence analysis for the iteration without preconditioning, can alternatively be based on the following estimates. These estimates are the only ones in this paper that exploit that the scattering operator is related to the $L^2$-projector $\P$.
	First note that 
	\begin{align*}
		\|e^{k+\frac{1}{2}}\|_b^2=\|\P e^{k+\frac{1}{2}}\|_{\sigma_t}^2+\|(I-\P)e^{k+\frac{1}{2}}\|_{\sigma_t}^2+\|e^{k+\frac{1}{2}}\|_{L^2_-}^2+\|\mudz e^{k+\frac{1}{2}}\|_{\frac{1}{\sigma_t}}^2
	\end{align*}
	and that $\|e^{k+\frac{1}{2}}\|_b^2= k(e^k,e^{k+\frac{1}{2}})$. Setting $c=\|\sigma_s/\sigma_t\|_\infty$, the Cauchy-Schwarz inequality yields
	\begin{align*}
		(1-\frac{\varepsilon}{2})\|\P e^{k+\frac{1}{2}}\|_{\sigma_t}^2+\|(I-\P)e^{k+\frac{1}{2}}\|_{\sigma_t}^2+\|e^{k+\frac{1}{2}}\|_{L^2_-}^2+\|\mudz e^{k+\frac{1}{2}}\|_{\frac{1}{\sigma_t}}^2
		\leq \frac{c^2}{2\varepsilon}\|\P e^{k}\|_{\sigma_t}^2
	\end{align*}
	for any $\varepsilon \in (0,2]$. Choosing $\varepsilon=2$ shows that parts of the error are smoothened independently of $c$, i.e.,
	\begin{align*} \|(I-\P)e^{k+\frac{1}{2}}\|_{\sigma_t}^2+\|e^{k+\frac{1}{2}}\|_{L^2_-}^2+\|\mudz e^{k+\frac{1}{2}}\|_{\frac{1}{\sigma_t}}^2\leq \frac{c^2}{4}\|\P e^{k}\|_{\sigma_t}^2,
	\end{align*}
	while the angular average is hardly damped. In any case, this shows that $\P e^k$ converges to zero. It remains open how to exploit such an estimate to improve the analysis of the DSA preconditioned scheme above as it seems difficult to relate the latter smoothing property to the best approximation error of $e^{k+\frac{1}{2}}$ in the $a$-norm.
\end{remark}
\section{Galerkin approximations}\label{sec:galerkin}
In this section, we construct conforming approximation spaces $\WW_{h,N}^+\subset\WW^+$ in a two-step procedure. In a first step, we discretize the $\mu$-variable using discontinuous ansatz functions. In a second step, we discretize the $z$-variable by continuous finite elements.
Before stating the particular approximation space, we provide some general results. 
Let us begin with the definition of the discrete problem.
\begin{problem}\label{prob:disc}
 Let $q\in L^2(\DLmu)$, $g\in L^2_-$, $\WW_{h,N}^+\subset\WW^+$ and let $a$ and $\ell$ be defined as in Problem~\ref{prob:cont}. Find $u_h\in \WW_{h,N}^+$ such that for all $v\in\WW_{h,N}^+$ there holds
 \begin{align*}
 	a(u_h,v_h)=\ell(v_h).
 \end{align*}
\end{problem}
Since the bilinear form $a$ induces the energy norm that we use in our analysis, we immediately obtain the following best approximation result.
\begin{theorem}\label{thm:well_posed_disc}
	Let $\WW_{h,N}^+$ be a closed subspace of $\WW^+$. Then, there exists a unique solution $u_h\in\WW_{h,N}^+$ of Problem~\ref{prob:disc} that satisfies the a-priori estimate
	\begin{align}\label{eq:aprior_disc}
		\|u_h\|_a \leq  (\|q^+\|_{\frac{1}{\sigma_a}}^2+\|q^-\|_{\frac{1}{\sigma_t}}^2+2\|g_-\|_{L^2_-}^2)^{\frac{1}{2}},
	\end{align}
	and the following best approximation error estimate
	\begin{align}\label{eq:error_disc}
		\|u-u_h\|_{a}\leq \inf_{v_h\in\WW_{h,N}^+}\|u-v_h\|_{a}.
	\end{align}
\end{theorem}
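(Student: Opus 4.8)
The plan is to mimic the proof of Theorem~\ref{thm:solve_problem}, exploiting that $a(\cdot,\cdot)$ is precisely the inner product inducing the energy norm $\|\cdot\|_\WW=\|\cdot\|_a$ on $\WW^+$. First I would note that, since $\WW_{h,N}^+$ is a closed subspace of the Hilbert space $(\WW^+,a(\cdot,\cdot))$, it is itself a Hilbert space for the same inner product, and the restriction to $\WW_{h,N}^+$ of the bounded linear form $\ell$ (bounded by the lemma preceding Theorem~\ref{thm:solve_problem}) remains bounded with the same constant. The Riesz representation theorem then yields a unique $u_h\in\WW_{h,N}^+$ with $a(u_h,v_h)=\ell(v_h)$ for all $v_h\in\WW_{h,N}^+$.

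For the a-priori estimate \eqref{eq:aprior_disc}, I would test the discrete equation with $v_h=u_h$ and use the boundedness of $\ell$,
\begin{align*}
	\|u_h\|_a^2 = a(u_h,u_h)=\ell(u_h)\leq \bigl(\|q^+\|_{\frac{1}{\sigma_a}}^2+\|q^-\|_{\frac{1}{\sigma_t}}^2+2\|g_-\|_{L^2_-}^2\bigr)^{\frac{1}{2}}\|u_h\|_a,
\end{align*}
and divide by $\|u_h\|_a$, the bound being trivial when $u_h=0$.

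For the best-approximation bound \eqref{eq:error_disc}, the key point is Galerkin orthogonality: since $\WW_{h,N}^+\subset\WW^+$, every $v_h\in\WW_{h,N}^+$ is an admissible test function in both Problem~\ref{prob:cont} and Problem~\ref{prob:disc}, and both use the same $a$ and $\ell$; subtracting the two equations gives $a(u-u_h,v_h)=0$ for all $v_h\in\WW_{h,N}^+$. Hence, for arbitrary $v_h\in\WW_{h,N}^+$, writing $u-u_h=(u-v_h)+(v_h-u_h)$ with $v_h-u_h\in\WW_{h,N}^+$ and applying Cauchy--Schwarz for the inner product $a$,
\begin{align*}
	\|u-u_h\|_a^2 = a(u-u_h,u-u_h)=a(u-u_h,u-v_h)\leq\|u-u_h\|_a\,\|u-v_h\|_a .
\end{align*}
Dividing by $\|u-u_h\|_a$ and taking the infimum over $v_h\in\WW_{h,N}^+$ yields \eqref{eq:error_disc}.

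I do not expect a genuine obstacle here: the statement is the standard consequence of having recast the problem in symmetric, coercive (energy-norm) form, and the only thing worth stressing is that no separate coercivity or inf--sup argument on the subspace is needed — it is exactly because $a$ is the inner product of $\WW^+$ that conformity $\WW_{h,N}^+\subset\WW^+$ suffices and that the quasi-best approximation estimate holds with constant one.
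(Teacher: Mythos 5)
Your proposal is correct and follows exactly the reasoning the paper intends: the paper gives no separate proof, stating only that the result is immediate because $a$ is the inner product inducing the energy norm, which is precisely the Riesz-representation/Galerkin-orthogonality argument you spell out. (One could even note that $u_h$ is the $a$-orthogonal projection of $u$ onto $\WW_{h,N}^+$, so \eqref{eq:error_disc} holds with equality, but the stated inequality is all that is claimed.)
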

In the next sections, we discuss some particular discretizations. We note that these generalize the spherical harmonics approach presented in \cite{EggerSchlottbom12}.
\subsection{$hp$ semidiscretization in $\mu$}\label{sec:semi_discrete}
Since we consider even functions, we require that the partition of the interval $[-1,1]$ for the $\mu$ variable respects the point symmetry $\mu\mapsto -\mu$. For simplicity, we thus partition the interval $[0,1]$ only, and the partition of $[-1,0]$ is implicitly defined by reflection.

Let $N\in\NN$ be a positive integer, and define intervals $\bar \mu_n=(\mu_{n-\frac{1}{2}},\mu_{n+\frac{1}{2}})$, $n=1,\ldots, N$, such that $\mu_{\frac{1}{2}}=0$ and $\mu_{N+\frac{1}{2}}=1$, and set $\Delta\mu_n=\mu_{n+\frac{1}{2}}-\mu_{n-\frac{1}{2}}$ and $\mu_n=(\mu_{n+\frac{1}{2}}+\mu_{n-\frac{1}{2}})/2$. Denote $\chi_n(\mu)$ the characteristic function of the interval $\bar\mu_n$. For $\mu>0$, we define the piecewise functions
\begin{align*}
	Q_{n,l}(\mu) = \sqrt{\frac{2l+1}{2}} P_l(2\frac{\mu-\mu_{n-\frac{1}{2}}}{\Delta\mu_n}-1) \chi_{\bar\mu_n}(\mu), \quad \mu >0,
\end{align*}
where $P_l$ denotes the $l$th Legendre polynomial. Hence, $\{Q_{n,l}\}_{l=0}^L$ is an $L^2$-orthonormal basis for the space of polynomials of degree $L$ on each interval $\bar\mu_n$.
For $\mu>0$, we set $Q_{n,l}^\pm(\mu)=Q_{n,l}(\mu)$, and for $\mu<0$, we set $Q_{n,l}^\pm(\mu)=\pm Q_{n,l}^\pm(-\mu)$.
The semidiscretization of the even parts is then
\begin{align*}
	u(z,\mu) \approx u_h(z,\mu)=\sum_{n=1}^N\sum_{l=0}^{L} \phi_{n,l}^+(z) Q_{n,l}^+(\mu).
\end{align*}
\begin{remark}
	If we partition the interval $[-1,1]$ for the angular variable by a single element, we obtain truncated spherical harmonics approximations, see, e.g., \cite{LewisMiller84,EggerSchlottbom12}. Partitioning of $[-1,1]$ into two symmetric intervals $(-1,0)\cup(0,1)$ corresponds to the double $P_L$ method \cite{LewisMiller84}, which generalizes in multiple dimensions to half space moment methods \cite{Dubroca03}.
	The latter can resolve the non-smoothness of $\phi$ at $\mu=0$, and, thus might yield spectral convergence on the intervals $\mu>0$ and $\mu<0$.
\end{remark}
\subsection{Fully discrete scheme}
In order to obtain a conforming discretization, we approximate the coefficient functions $\phi_{n,l}^+$ using $H^1(0,\slabwidth)$-conforming elements.
Let $J\in\NN$, and $\bar z_j = (z_{j-1},z_{j})$ such that
\begin{align*}
	[0,\slabwidth]=\cup_{j=1}^{J} {\rm clos}(\bar z_j)
\end{align*}
be a partition of $(0,\slabwidth)$. Let $p\geq 1$ and denote $P_p$ the space of polynomials of degree $p$.
The full approximation space is then defined by
\begin{align}\label{eq:w1}
	\WW_{h,N}^+ &=\{ \psi_h^+(z,\mu)=\sum_{n=1}^N\sum_{l=0}^L \psi_{n,l}^+(z) Q_{n,l}^+(\mu): \psi_{n,l}^+(z)\in H^1(0,\slabwidth),\ \psi^+_{n,l\mid \bar z_j}\in P_p\}.
\end{align}
The choice \eqref{eq:w1} for the approximation space $\WW_{h,N}^+$ corresponds to an {\it hp} finite element method, for which the assertion of Theorem~\ref{thm:well_posed_disc} holds true.

\begin{remark}\label{rem:connection_to_mixed}
	If the solution $u_h\in\WW_{h,N}^+$ to Problem~\ref{prob:disc} is computed, we compute even and odd approximations to the solution $\phi$ of \eqref{eq:rte1} via $\phi_h^+=u_h$ and the odd part $\phi_h^-\in\WW_h^-$ as the solution to the variational problem $(\phi_h^-,\psi_h^-)=(\frac{1}{\sigma_t}(q^--\mudz u_h),\psi_h^-)$ for all $\psi_h^-\in\WW_h^-$, where 
	\begin{align*}
			\WW_{h,N}^- &=\{ \psi_h^-(z,\mu)=\sum_{n=1}^N\sum_{l=0}^{L+1} \psi_{n,l}^-(z) Q_{n,l}^-(\mu): \psi^-_{n,l\mid \bar z_j}\in P_{p-1}\}.
	\end{align*}
	We note that this space satisfies the compatibility condition $\mudz \WW_{h,N}^+\subset\WW_{h,N}^-$, which makes this pair of approximation spaces suitable for a direct approximation of a corresponding mixed formulation, cf. \cite{EggerSchlottbom12}. The even-parity formulation that we consider here corresponds then to the Schur complement of the mixed problem, cf. \cite{EggerSchlottbom12}. The reader should note the different degrees in the polynomial approximations, e.g., if $\phi_h^+$ is piecewise constant in angle, then $\phi_h^-$ is piecewise linear.
\end{remark}
\section{Discrete preconditioned source iteration}\label{sec:sn}

In order to solve the discrete variational problem defined in Problem~\ref{prob:disc}, we proceed as in Section~\ref{sec:dsa_source} but with $\WW^+$ and $\WW^+_1$ replaced by $\WW^+_{h,N}$ and $\WW^+_{h,1}$, respectively. We note that $\WW^+_{h,1}\subset \WW^+_1$ consists of functions in $\WW^+_{h,N}$ that do not dependent on $\mu$.

The finite dimensional counterpart of the DSA preconditioned source iteration is then defined as follows:
For given $u_h^k\in\WW_{h,N}^+$, compute $u_h^{k+\frac{1}{2}}\in\WW_{h,N}^+$ as the unique solution to
\begin{align}\label{eq:source_iteration_ep_disc}
	b(u_h^{k+\frac{1}{2}},v_h)=k(u^k_h,v_h) + \ell(v_h)\quad\text{for all } v_h\in \WW^+_{h,N}.
\end{align}
The correction $u^{k+\frac{1}{2}}_{h,D}\in\WW_{h,1}^+$ is defined by Galerkin projection of $e_h^{k+\frac{1}{2}}$ to $\WW_{h,1}^+$:
\begin{align}\label{eq:DSA_update_disc}
	a(u_{h,D}^{k+\frac{1}{2}},v_h)=k(u_h^{k+\frac{1}{2}}-u_h^k,v_h)\quad\text{for all } v_h\in\WW^+_{h,1},
\end{align}
and the new iterate is defined as
\begin{align}\label{eq:update_with_DSA_disc}
	u^{k+1}_h=u^{k+\frac{1}{2}}_h+u_{h,D}^{k+\frac{1}{2}}.
\end{align}
Using the same arguments as above, we obtain the following convergence result.
\begin{theorem}\label{thm:conv_DSA_SI_disc}
	Let Assumption (A1) hold, and let $c<1$ be as in Lemma~\ref{lem:convergence_half-step}.  For any $u_h^0\in\WW^+_{h,N}$, the iteration defined by \eqref{eq:source_iteration_ep_disc}, \eqref{eq:DSA_update_disc}, \eqref{eq:update_with_DSA_disc} converges linearly with
	\begin{align*}
		\|u_h-u_h^{k+1}\|_a \leq \matthias{c}\|u_h-u_h^k\|_a.
	\end{align*}
\end{theorem}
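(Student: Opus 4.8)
The plan is to mimic the proof of the infinite-dimensional result (Theorem~\ref{thm:conv_DSA_SI}) verbatim, observing that nothing in that argument used the specific structure of $\WW^+$ beyond it being a Hilbert space under the $b$-inner product and $\WW^+_1$ being a closed subspace. First I would set up the discrete analogue of Lemma~\ref{lem:convergence_half-step}: endow $\WW^+_{h,N}$ with the inner product induced by $b$, and define bounded self-adjoint positive operators $A_h, K_h$ on $\WW^+_{h,N}$ by $b(A_h u, v) = a(u,v)$ and $b(K_h u, v) = k(u,v)$ for $u,v \in \WW^+_{h,N}$. Since $a = b - k$ on all of $\WW^+$, hence in particular on $\WW^+_{h,N}$, we get $A_h = I - K_h$. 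Writing $e_h^{k+1/2} = u_h - u_h^{k+1/2}$ and $e_h^k = u_h - u_h^k$, subtracting \eqref{eq:source_iteration_ep_disc} from $b(u_h,v_h) = k(u_h,v_h) + \ell(v_h)$ (which holds because $u_h$ solves Problem~\ref{prob:disc} and $b = a + k$) yields $b(e_h^{k+1/2}, v_h) = k(e_h^k, v_h)$ for all $v_h \in \WW^+_{h,N}$, i.e.\ $e_h^{k+1/2} = K_h e_h^k$.

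Next I would reproduce the spectral estimate. The crucial bound on the numerical range,
\begin{align*}
	0 \leq b(K_h v, v) = k(v,v) \leq \|\tfrac{\sigma_s}{\sigma_t}\|_\infty (\sigma_t v, v) \leq c\, b(v,v),
\end{align*}
holds for every $v \in \WW^+_{h,N} \subset \WW^+$ simply by restricting the same inequality used in Lemma~\ref{lem:convergence_half-step}; this gives $\sigma(K_h) \subset [0,c]$. Then, exactly as before,
\begin{align*}
	\|e_h^{k+1/2}\|_a^2 = b((I-K_h)K_h e_h^k, K_h e_h^k) = \|K_h^2 (I-K_h)^{1/2} e_h^k\|_b^2 \leq \max\sigma(K_h)^2 \|(I-K_h)^{1/2} e_h^k\|_b^2 \leq c^2 \|e_h^k\|_a^2,
\end{align*}
using $\|(I-K_h)^{1/2} e_h^k\|_b^2 = b(A_h e_h^k, e_h^k) = a(e_h^k, e_h^k) = \|e_h^k\|_a^2$ and self-adjointness of $K_h$ to move powers of $K_h$ around.

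Finally I would handle the DSA correction exactly as in Theorem~\ref{thm:conv_DSA_SI}. From \eqref{eq:DSA_update_disc} and the relation $u_h^{k+1/2} - u_h^k = e_h^k - e_h^{k+1/2}$ one checks that $u_{h,D}^{k+1/2}$ satisfies $a(u_{h,D}^{k+1/2}, v_h) = a(e_h^{k+1/2}, v_h)$ for all $v_h \in \WW^+_{h,1}$ (this uses that the half-step error identity $a(e_h^{k+1/2},v_h) = k(u_h^{k+1/2}-u_h^k,v_h)$ holds for $v_h \in \WW^+_{h,N} \supset \WW^+_{h,1}$, which follows from $b(e_h^{k+1/2},v_h) = k(e_h^k,v_h)$ and $a = b-k$). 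Hence $u_{h,D}^{k+1/2}$ is the $a$-orthogonal projection of $e_h^{k+1/2}$ onto the closed subspace $\WW^+_{h,1}$, so with $e_h^{k+1} = u_h - u_h^{k+1} = e_h^{k+1/2} - u_{h,D}^{k+1/2}$ we get
\begin{align*}
	\|e_h^{k+1}\|_a = \inf_{v_h \in \WW^+_{h,1}} \|e_h^{k+1/2} - v_h\|_a \leq \|e_h^{k+1/2}\|_a \leq c \|e_h^k\|_a,
\end{align*}
which is the claim. I do not expect any genuine obstacle here: the entire point of formulating the scheme as the Ritz--Galerkin projection of the infinite-dimensional iteration is that conformity $\WW^+_{h,1} \subset \WW^+_{h,N} \subset \WW^+$ makes every inequality descend to the subspace unchanged. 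The only thing to be careful about is that the contraction constant $c = \|\sigma_s/\sigma_t\|_\infty$ is the \emph{same} mesh-independent constant as in the continuous case — i.e.\ robustness with respect to discretization — and that requires noting the numerical-range bound is inherited from $\WW^+$ rather than re-derived with discretization-dependent constants.
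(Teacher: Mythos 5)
Your proof is correct and is essentially the paper's own argument: the paper establishes Theorem~\ref{thm:conv_DSA_SI_disc} simply by remarking that the arguments of Lemma~\ref{lem:convergence_half-step} and Theorem~\ref{thm:conv_DSA_SI} apply verbatim on the closed subspace $\WW^+_{h,N}$ with $\WW^+_{h,1}$ in place of $\WW^+_1$, which is exactly what you carry out, including the observation that the numerical-range bound for $K_h$ is inherited from $\WW^+$ and hence mesh-independent. One small transcription slip: $b((I-K_h)K_h e_h^k, K_h e_h^k)$ equals $\|K_h(I-K_h)^{1/2}e_h^k\|_b^2$, not $\|K_h^2(I-K_h)^{1/2}e_h^k\|_b^2$; with that power corrected, the subsequent bound by $\max\sigma(K_h)^2\,\|(I-K_h)^{1/2}e_h^k\|_b^2$ goes through as intended.
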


\begin{remark}\label{rem:diffusion_disc}
	Similar to Remark~\ref{rem:diffusion}, $u_{h,D}^{k+\frac{1}{2}}$ is the Galerkin projection to $\WW_{h,1}^+$ of the solution to 
	\begin{align*}
		-\partial_z (D(z) \partial_z u_D) + \sigma_a u_D&= f,\quad 0<z<\slabwidth,
	\end{align*}
	with $\mu$-grid dependent diffusion coefficient $D(z)$ and $f=\sigma_s\P(u_h^{k+\frac{1}{2}}-u_h^k)$. If piecewise constant functions in angle are employed, then $D(z)=\frac{1}{3\sigma_t(z)}(1+ \frac{1}{4}\sum_{n=1}^N \Delta\mu_n^3)$. 
\end{remark}

\begin{remark}
	Once the scattering term in the right-hand side of \eqref{eq:source_iteration_ep_disc} has been computed, the half-step iterate $u_h^{k+\frac{1}{2}}$ can be computed independently for each direction, and, thus, its computation can be parallelized.
\end{remark}

\section{Numerical examples}\label{sec:numerics}
In this section, we report on the accuracy of the proposed discretization scheme and its efficient numerical solution using the DSA preconditioned source iteration of Section~\ref{sec:sn}. 
We restrict our discussion to a low-order method that offers local resolution. To do so, we fix $p=1$ and $L=0$ in the definition of $\WW^+_{h,N}$, while $N$ might be large. Hence, the approximation space $\WW^+_{h,N}$ consists of discontinuous, piecewise constant functions in the angular variable and continuous, piecewise linear functions in $z$.

\subsection{Manufactured solutions}
To investigate the convergence behavior, we use manufactured solutions, i.e., the exact solution is
\begin{align}\label{eq:manu1}
	\phi(z,\mu)=|\mu|e^{-\mu}e^{-z(1-z)},
\end{align}
with parameters $\sigma_a=1/100$, $\sigma_s(z)=2+\sin(\pi z)/2$ and $\slabwidth=1$, and source terms defined accordingly.
We computed the numerical solution $u_h$ using the DSA preconditioned iteration \eqref{eq:source_iteration_ep_disc}, \eqref{eq:DSA_update_disc}, \eqref{eq:update_with_DSA_disc}.
We stopped the iteration using the a-posteriori stopping rule
\begin{align*}
	\|u_h^{k}-u_h^{k-1}\|_{a}\leq \varepsilon,
\end{align*}
where $\varepsilon=10^{-10}$ is a chosen tolerance.
The approximations of the even and odd parts of the solutions are recovered as described in Remark~\ref{rem:connection_to_mixed}.

For the chosen parameters, we have that $\kappa\leq 250$, which leads to a predicted convergence rate of $0.996$. 
Table~\ref{tab:manu1} shows the errors for different discretization parameters $N$ and $J$. As expected we observe a linear rate of convergence with respect to $N$, cf. Theorem~\ref{thm:well_posed_disc}.
In addition, we observe that the preconditioned source iteration converged within at most $15$ iterations, and the expression $\|u_h^{k}-u_h^{k-1}\|_{a}$ decreased by $0.21$ in each iteration.
The convergence rate with respect to $J$ is initially quadratic, which is better than predicted by Theorem~\ref{thm:well_posed_disc}, and then saturates for fixed $N=8192$. As before, the preconditioned source iteration converged within at most $15$ iterations with a decrease by a factor of $0.21$ of $\|u_h^{k}-u_h^{k-1}\|_{a}$.

The observed convergence rate is thus as the one obtained by classical Fourier analysis for constant coefficients and periodic boundary conditions, which is bounded by $0.2247$ \cite{AdamsLarsen02}. 
Furthermore, we observe that the rate of convergence does not dependent on the grid size as predicted by Theorem~\ref{thm:conv_DSA_SI_disc}, cf. Remark~\ref{rem:diffusion_disc}, i.e., using the terminology of \cite{AdamsLarsen02}, our discrete diffusion approximation is consistently discretized.


\begin{table}[ht!]
\centering\small\setlength\tabcolsep{1em}
\begin{tabular}{r c c} 
	\toprule
$N$ & $E_h$ & rate\\
\midrule
 512 & 1.61e-04 &    \\ 
1024 & 8.07e-05 & 0.99 \\
2048 & 4.04e-05 & 0.99 \\
4096 & 2.04e-05 & 0.99 \\
8192 & 1.06e-05 & 0.95 \\
\bottomrule
\end{tabular}\quad \quad
\begin{tabular}{r c c} 
	\toprule
$J $ & $E_h$ & rate\\
\midrule
16&7.88e-04 & \\
32&1.99e-04 &1.98 \\
64&5.14e-05 &1.96 \\
128&1.63e-05 &1.66 \\
256&1.06e-05 &0.62 \\
\bottomrule
\end{tabular}
\caption{Observed errors $E_h=\|\phi-\phi_h\|_{L^2(\DLmu)}$ between finite element solution $\phi_h$ and the manufactured solution $\phi$ defined in \eqref{eq:manu1} together with the rate of convergence of $E_h$. Left: Convergence for different discretization parameters $N$, and $J=256$. Right: Convergence for different discretization parameters $J$ and $N=8192$. 
\label{tab:manu1}}
\end{table}

In the next section, we present a more detailed study of spectrum of the preconditioned operator.

\subsection{Eigenvalue studies of the preconditioned operator}
In this section, we consider the spectrum of the error propagation operator $\P e_h^{n}\mapsto \P e_h^{n+1}$, where $\P$ is the $L^2$-projection onto constants in angle defined in Section~\ref{sec:prelim} and $e_h^{n}$ is the sequence of errors generated by the DSA preconditioned source iteration, cf. Remark~\ref{rem:alternative}.
The function $\P e_h^{n}$ depends only on $z$, and, assuming $\sigma_s>0$, we can measure the projected error using the norm induced by $\sigma_s$, i.e.,
\begin{align*}
	\|\P e_h^n\|_{\sigma_s} = \|e_h^n\|_k.
\end{align*}
We choose the following scattering and absorption parameters
\begin{align*}
	\sigma_s(z)=\begin{cases}2+\sin(2\pi z), & z\leq \frac{1}{2}\\ 102+\sin(2\pi z), & z>\frac{1}{2},\end{cases}\qquad
	\sigma_a(z)=\begin{cases}10.01, & z\leq \frac{1}{2}\\ 0.01, & z>\frac{1}{2}.\end{cases}
\end{align*}
We notice that both parameters have huge jumps and that \matthias{the predicted convergence rate is $c=\|\sigma_s/\sigma_t\|_\infty \approx 0.9999$, cf. Theorem~\ref{thm:conv_DSA_SI}}.

In Figure~\ref{fig:eigenvalues} we plot the spectrum of the error propagation operator for different mesh sizes. 
All eigenvalues are bounded from above by $0.2247$, which is inline with the results of \cite{MarchukLebedev86,AdamsLarsen02}. For each spatial discretization, we observe that the corresponding eigenvalues are monotonically increasing with $N$. The spectra for $N\geq 16$ lie on top of each other, indicating convergence of the eigenvalues. In particular, also for the coarse grid approximations the spectrum is uniformly bounded by $0.2247$, again confirming the robustness of the method with respect to different approximations.

\begin{figure}
	\includegraphics[width=0.3\textwidth]{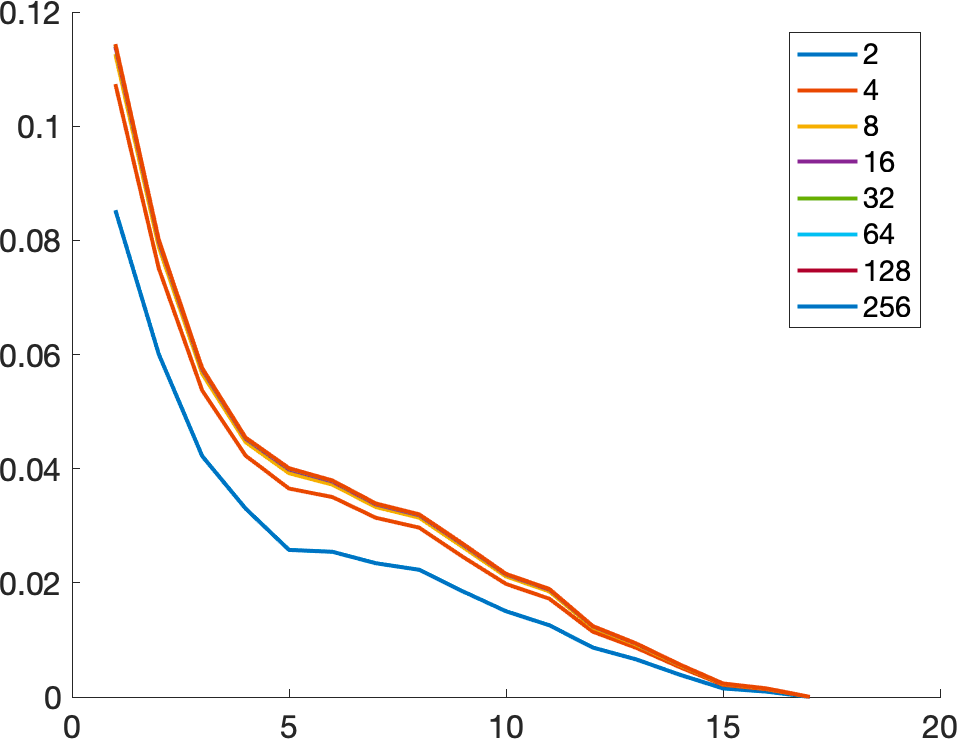}\hfill
	\includegraphics[width=0.3\textwidth]{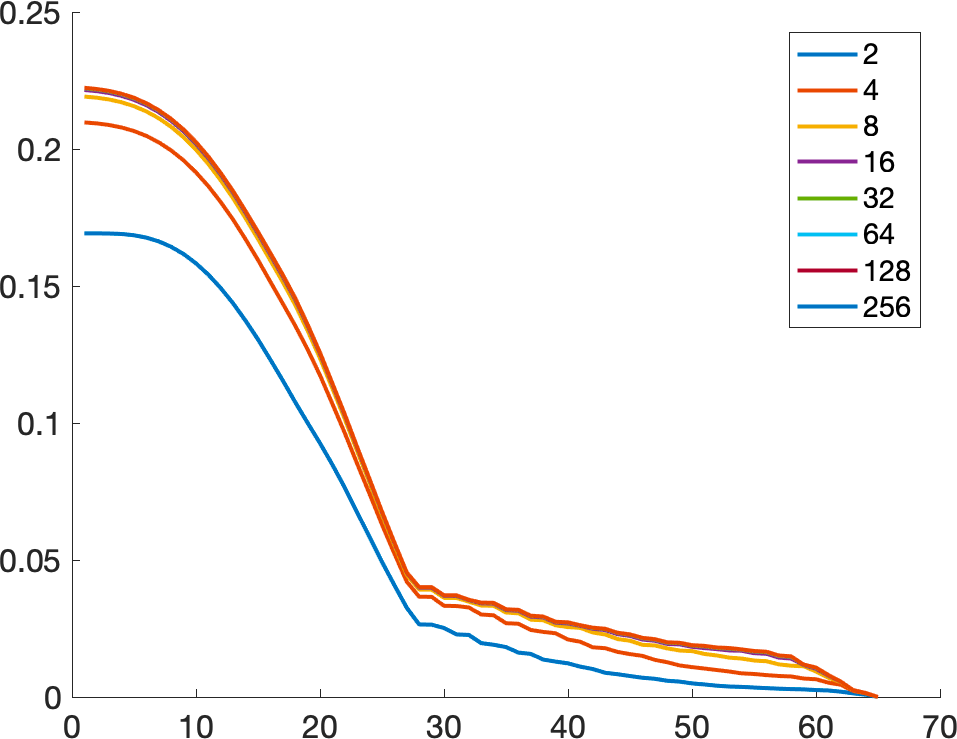}\hfill
	\includegraphics[width=0.3\textwidth]{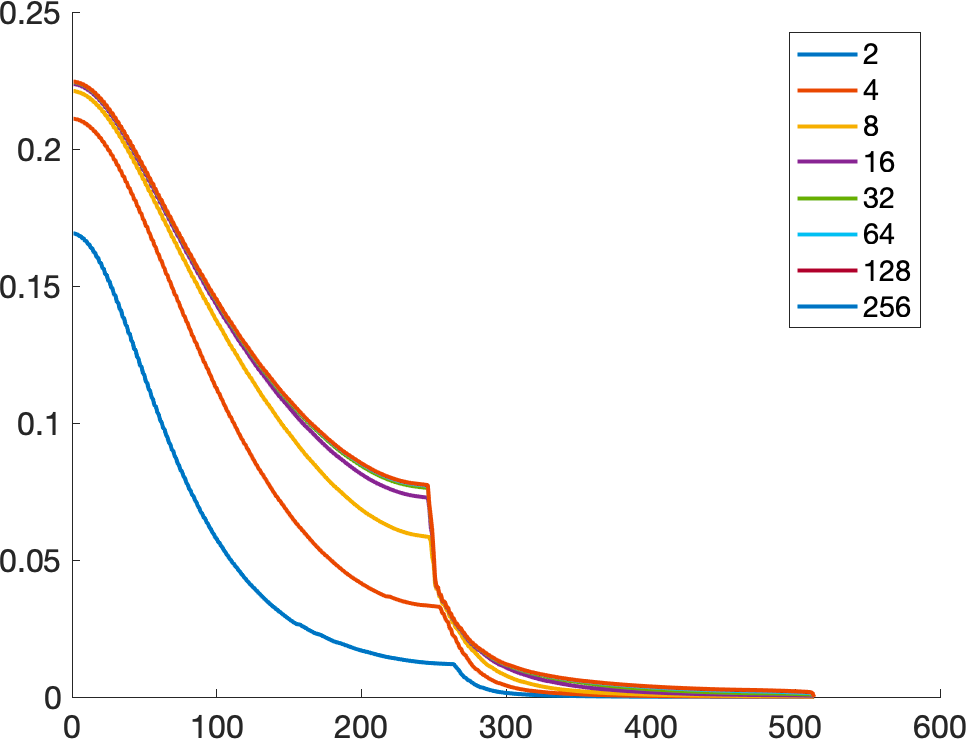}		
	\caption{\label{fig:eigenvalues} Spectra of the error propagation operator $\P e_h^{n}\mapsto \P e_h^{n+1}$ for different spatial discretizations $J=16,64,512$ (from left to right). Each plot contains the corresponding spectra for $N=2^i$, $i=1,\ldots,8$.}
\end{figure}

\subsection{Multi-D: The lattice problem}
Although the theory has been presented for slab problems, our results carry over verbatim to other problems.
In the following we report on the performance of the method for the lattice problem, cf. \cite{Brunner05}.
This test problem is a three-dimensional problem with certain symmetries. The radiative transfer equation here writes as
\begin{align*}
	s \cdot\nabla \phi(x,s) + \sigma_t(x) \phi(x,s)=\sigma_s(x)\P\phi + q(x,s)
\end{align*}
where $x\in\R\subset\RR^2$ and $s\in\mathbb{S}^2\subset\RR^3$. 
The computational domain is a square $\R=(0,7)\times(0,7)$, and homogeneous inflow boundary conditions are considered. The absorption and scattering rates are piecewise constant functions. We define $\sigma_a=10$ in the black regions shown in Figure~\ref{fig:geometry}, and $\sigma_a=0$ elsewhere. 
We set $\sigma_s=1$ in the grey and white regions and $\sigma_s=0$ elsewhere. The source is defined by $q(x,s)=1$ in the white region and $q(x,s)=0$ elsewhere. Note that due to the availability of a Poincar\'e-Friedrichs inequality \cite{ManResSta00}, the case $\sigma_a=0$ leads to a well-posed radiative transfer problem, and, since $\sigma_s+\sigma_a\geq 1$, the theory presented here is applicable. Moreover, the constant \matthias{$c$} will depend on the constant from the Poincar\'e-Friedrichs inequality and \matthias{$c<1$} even if $\sigma_a$ vanishes.

We discretize $L^2(\mathbb{S}^2)$ by approximating $\mathbb{S}^2$ using a geodesic polyhedron consisting of flat triangles, see Figure~\ref{fig:geometry}. The approximation space in angle then consists of standard discontinuous finite element spaces associated to this triangulation. In the following, we focus on piecewise constant approximations, but higher order approximations are straight-forward if the geometry approximation is also of higher order.

In Figure~\ref{fig:two_solutions} we show the angular averages of the computed solutions for two different grids
 with $J=9\,801$ vertices in the spatial grid and $N=4$ triangles on a half-sphere and for $J=78\,961$ and $N=64$, respectively.
We note that our solutions do not exhibit ray effects, cf. \cite{LewisMiller84,Brunner05}. 
%
The preconditioned source iteration converged with $9$ and $17$ iterations for the coarse grid approximation and for the fine grid approximation, respectively. This amounts to an error reduction per iteration of at least $0.04$ for the coarse grid discretization and of at least $0.20$ for the fine grid discretization.

\begin{figure}
	\includegraphics[width=0.3\textwidth]{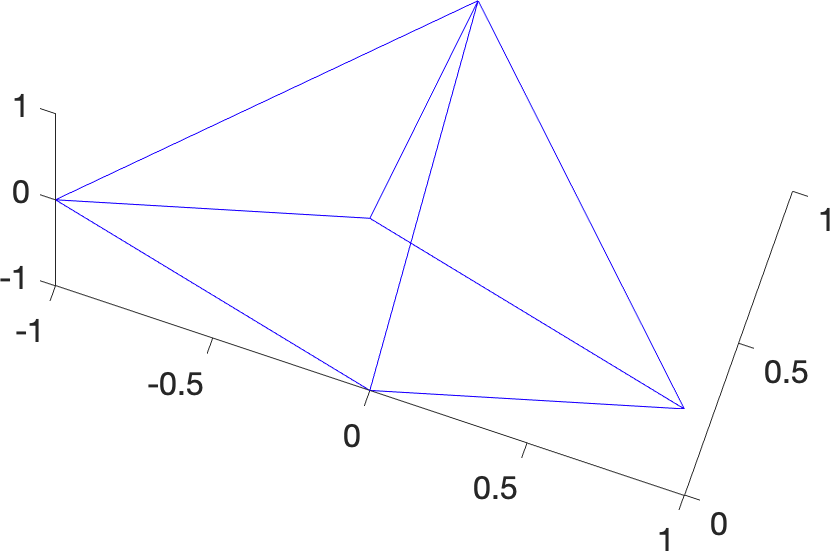}\hfill
	\includegraphics[width=0.3\textwidth]{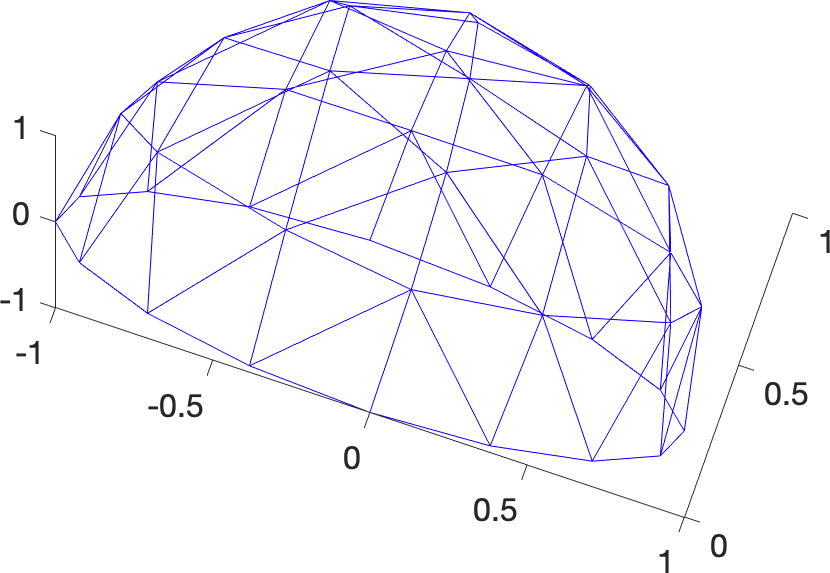}\hfill
	\includegraphics[width=0.3\textwidth]{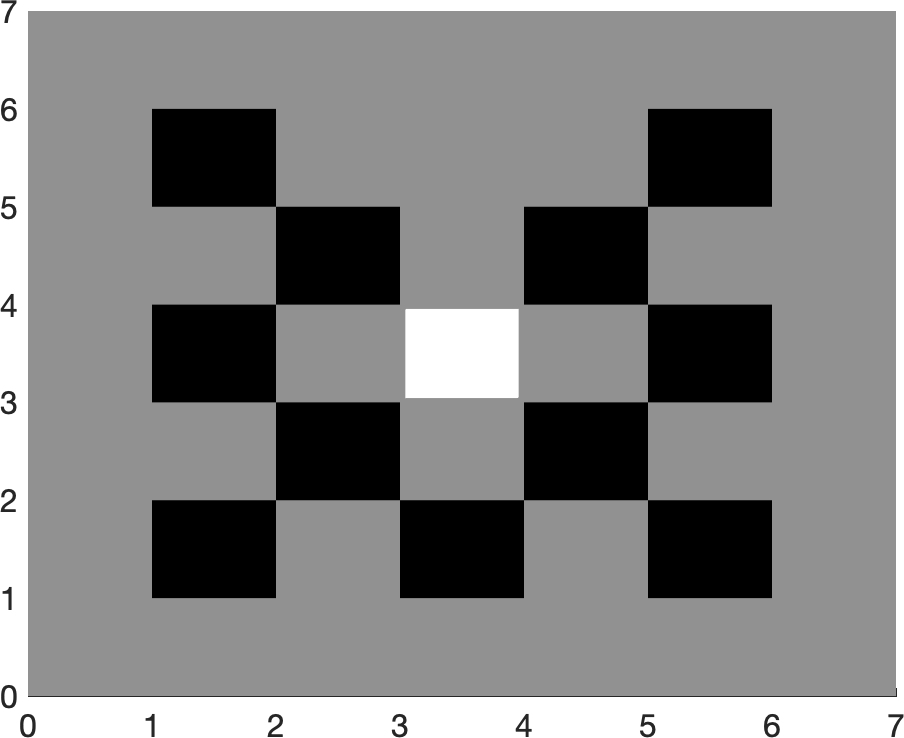}
	\caption{Left and middle: Approximation of the half-sphere with $N=4$ and $N=64$ triangles. Right: Geometry of the lattice problem. \label{fig:geometry}}
\end{figure}

\begin{figure}
	\includegraphics[width=.49\textwidth]{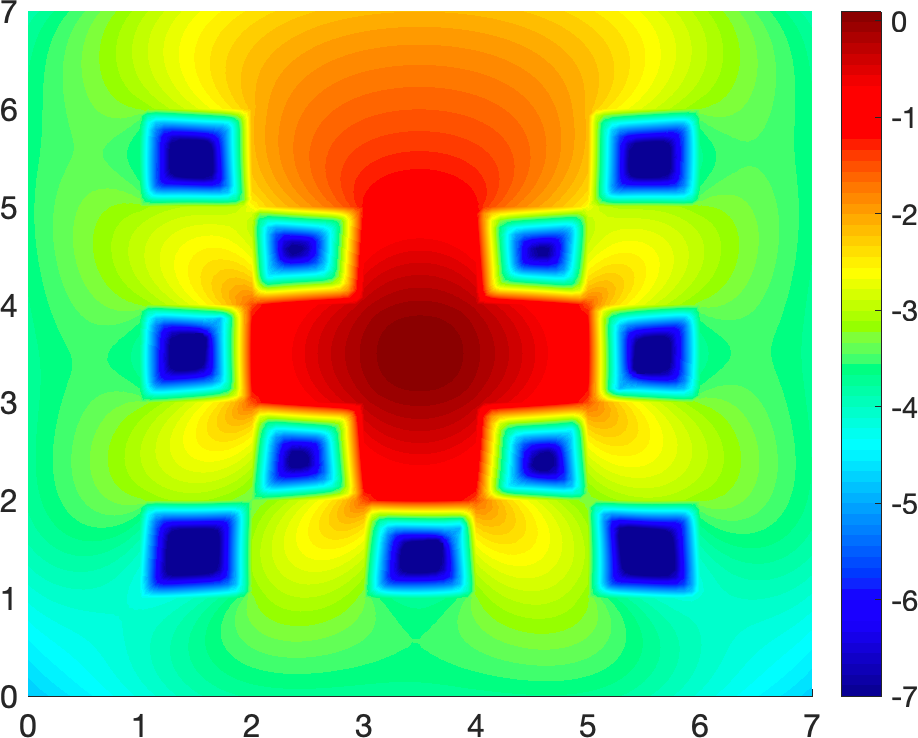}\hfill
	\includegraphics[width=.49\textwidth]{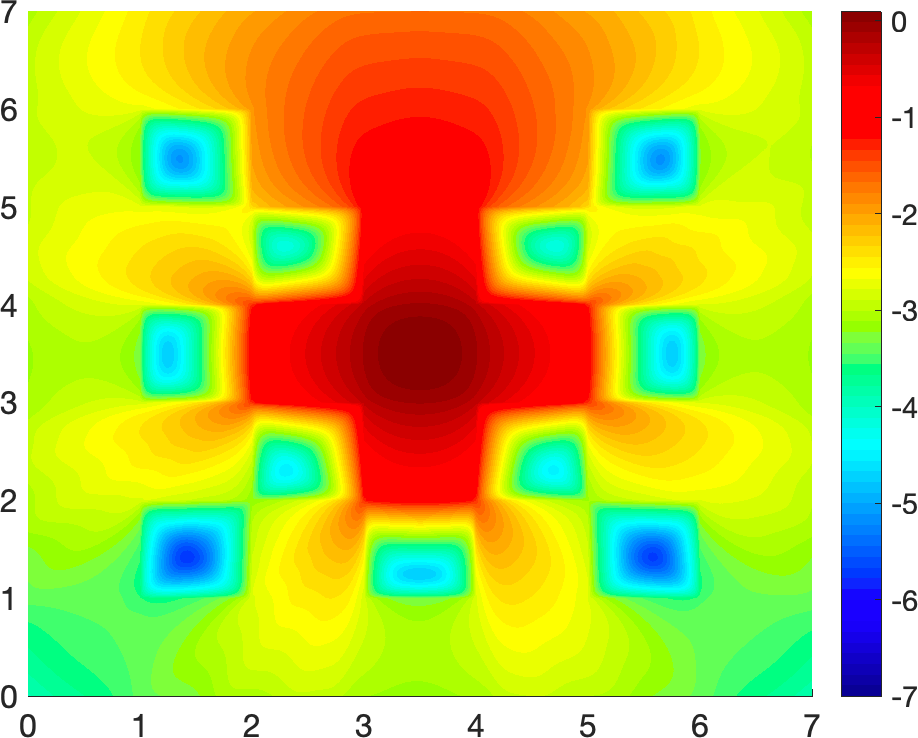}
	\caption{Angular average of the computed solution in a $\log_{10}$-scale for the lattice problem for $J=9\,801$ spatial vertices and $N=4$ triangles on a half-sphere (left) and $J=78\,961$ spatial vertices and $N=64$ triangles on a half-sphere (right).\label{fig:two_solutions}}
\end{figure}

Next, let us investigate the behavior of the preconditioned source iteration for scaled parameters in more detail. Introducing a scale parameter $\delta>0$, a diffusion limit is obtained by the scaling \cite{DautrayLions6}
\begin{align*}
	\frac{\bar \sigma_s(x)}{\delta},\quad \delta \bar \sigma_a(x), \quad \delta \bar q(x),
\end{align*}
where both parameters $\bar \sigma_s$ and $\bar \sigma_a$ are bounded and strictly bounded away from zero. Since this is not the case for the lattice problem, we consider the following parameters
\begin{align*}
	\sigma_s^\delta(x) =\frac{\sigma_s(x)+1/10}{\delta},\qquad \sigma_a^\delta=\delta(\sigma_a(x)+1/10),\qquad q^\delta(x,s)=\delta q(x).
\end{align*}
For $\delta\to 0$, the corresponding solution $u^\delta$ will converge to the solution of a diffusion problem; for non-smooth coefficients see \cite{EggerSchlottbom2014}. The parameter \matthias{$c$ defined in Lemma~\ref{lem:convergence_half-step} is bounded by $O(1/\delta)$}.
\begin{table}[ht!]
\centering\small\setlength\tabcolsep{1em}
\begin{tabular}{r c c c c c c c c c} 
	\toprule
& \multicolumn{4}{c}{$J=9\,801$} && \multicolumn{4}{c}{$J=78\,961$}\\ &\multicolumn{2}{c}{$N=4$}&\multicolumn{2}{c}{$N=64$}&&\multicolumn{2}{c}{$N=4$}&\multicolumn{2}{c}{$N=64$}\\
	 \cmidrule{2-5}	  \cmidrule{7-10}
$1/\delta$ & $k$ & rate & $k$ & rate && $k$ & rate & $k$ & rate\\
\midrule
 1  & 9 & 0.04  & 15 & 0.16 &&  9& 0.04 & 15 & 0.17\\ 
10  & 9 & 0.06  & 15 & 0.22 &&  9& 0.06 & 16 & 0.25\\
100 & 8 & 0.06  & 13 & 0.22 &&  9& 0.07 & 15 & 0.27\\
1000& 5 & 0.01  &  7 & 0.06  &&  6& 0.05 & 10 & 0.17\\
\bottomrule
\end{tabular}
\caption{Iteration counts $k$ and minimal reduction rates for $\|u_h^k-u_h^{k-1}\|_a$ for the lattice problem with scaled parameters $\sigma_s^\delta$, $\sigma_a^\delta$ and $q^\delta$ for different $\delta$ and discretizations with $N$ triangles on a half-sphere and $J$ vertices in the spatial mesh.
\label{tab:diffusion_scaling}}
\end{table}
Table~\ref{tab:diffusion_scaling} shows the iteration counts and the minimum reduction of $\|u_h^k-u_h^{k-1}\|_a$ during the iteration. We observe that the preconditioned iteration is robust with respect to $\delta\to 0$ for different meshes.
The convergence rate on the finest grid is, however, slightly worse than the convergence rate for slab problems.

\section{Conclusions}\label{sec:conclusion}

We investigated discontinuous angular and continuous spatial approximations of the even-parity formulation for the radiative transfer equation.
Certain instances of these approximations are closely related to classical discretizations, such as truncated spherical harmonics approximations, double $P_L$-methods or discrete ordinates methods.

We considered a diffusion accelerated preconditioned source iteration for the solution of the resulting variational problems that has been formulated in infinite dimensions. Convergence rates of this iteration have been proven. 
The Galerkin approach used for the discretization allowed to translate the results for the infinite dimensional iteration directly to the discrete problems. Moreover, the discrete iteration converges independently of the chosen resolution.

The theoretically proven convergence rate in Theorem~\ref{thm:conv_DSA_SI} is not robust in the limit of large scattering, while numerical results show that in practice the preconditioned iteration converges robustly even in scattering dominated problems. 
One approach to obtain an improved convergence rates estimate is to estimate the best approximation error $\inf_{v\in\WW^+_1} \|e^{k+\frac{1}{2}}-v\|_a$, which, however, seems rather difficult, and we postpone a corresponding rigorous analysis to future research.

The term diffusion acceleration is linked to the usage of the space $\WW_1^+$ in \eqref{eq:subspace_def} that consists of functions constant in angle. This choice is, however, not essential, and other closed subspace can be employed, which allows for the construction of multi-level schemes.
The multi-level approach might lead to a feasible approach to estimate the best approximation error.

In any case, the DSA preconditioner can be combined with a conjugate gradient method in order to further reduce the number of iterations. Moreover, unlike many discrete ordinates schemes, the  numerical approximation did not suffer from the ray effect in our numerical examples.

\section*{Acknowledgments}
The authors would like to thank Herbert Egger and Markus Bachmayr for stimulating discussions.

\bibliographystyle{elsarticle-num}
\bibliography{mixed_sn_slab}

\begin{thebibliography}{10}
\expandafter\ifx\csname url\endcsname\relax
  \def\url#1{\texttt{#1}}\fi
\expandafter\ifx\csname urlprefix\endcsname\relax\def\urlprefix{URL }\fi
\expandafter\ifx\csname href\endcsname\relax
  \def\href#1#2{#2} \def\path#1{#1}\fi

\bibitem{Chandrasekhar60}
S.~Chandrasekhar, Radiative Transfer, Dover Publications, Inc., 1960 (1960).

\bibitem{CaseZweifel67}
K.~M. Case, P.~F. Zweifel, Linear transport theory, Addison-Wesley, Reading,
  1967 (1967).

\bibitem{Vladimirov61}
V.~S. Vladimirov, Mathematical problems in the one-velocity theory of particle
  transport, Tech. rep., Atomic Energy of Canada Ltd. AECL-1661. translated
  from Transactions of the V.A. Steklov Mathematical Institute (61) (1961).

\bibitem{LewisMiller84}
E.~E. Lewis, W.~F. Miller~Jr., Computational Methods of Neutron Transport, John
  Wiley \& Sons, Inc., New York Chichester Brisbane Toronto Singapore, 1984
  (1984).

\bibitem{Pitkaranta1977}
J.~Pitk\"aranta, Approximate solution of the transport equation by methods of
  {G}alerkin type, J. Math. Anal. and Appl. 60 (1977) 186--210 (1977).

\bibitem{ManResSta00}
T.~A. Manteuffel, K.~J. Ressel, G.~Starke, A boundary functional for the
  least-squares finite-element solution for neutron transport problems, SIAM J.
  Numer. Anal. 2 (2000) 556--586 (2000).

\bibitem{EggerSchlottbom12}
H.~Egger, M.~Schlottbom, A mixed variational framework for the radiative
  transfer equation, Math. Mod. Meth. Appl. Sci. 22 (2012) 1150014 (2012).

\bibitem{EggerSchlottbom18}
H.~Egger, M.~Schlottbom, A perfectly matched layer approach for radiative
  transfer in highly scattering regimes., SIAM J. Numer. Anal. (accepted)
  (2019).

\bibitem{JohnsonPitkaranta83}
C.~Johnson, J.~Pitk\"aranta, Convergence of a fully discrete scheme for
  two-dimensional neutron transport, SIAM J. Numer. Anal. 20 (1983) 951--966
  (1983).

\bibitem{Asadzadeh86}
M.~Asadzadeh, Analysis of a fully discrete scheme for neutron transport in
  two-dimensional geometry, SIAM J. Numer. Anal. 23 (1986) 543--561 (1986).

\bibitem{ReedHill73}
W.~H. Reed, T.~R. Hill, Triangular mesh methods for the neutron transport
  equation, Tech. rep., Los Alamos Scientific Laboratory of the University of
  California (1973).

\bibitem{WareingMcGheeMorelPautz2001}
T.~Wareing, J.~McGhee, J.~Morel, S.~Pautz, Discontinuous finite element sn
  methods on three-dimensional unstructured grids, Nuclear Science and
  Engineering 138 (2001) 1---13 (2001).

\bibitem{RagusaGuermondKanschat12}
J.~C. Ragusa, J.-L. Guermond, G.~Kanschat, A robust {$S_N$}-{DG}-approximation
  for radiation transport in optically thick and diffusive regimes, J. Comput.
  Phys. 231~(4) (2012) 1947--1962 (2012).
\newblock \href {https://doi.org/10.1016/j.jcp.2011.11.017}
  {\path{doi:10.1016/j.jcp.2011.11.017}}.

\bibitem{KanschatRagusa14}
G.~Kanschat, J.-C. Ragusa, A robust multigrid preconditioner for {$S_N{\rm
  DG}$} approximation of monochromatic, isotropic radiation transport problems,
  SIAM J. Sci. Comput. 36~(5) (2014) A2326--A2345 (2014).
\newblock \href {https://doi.org/10.1137/13091600X}
  {\path{doi:10.1137/13091600X}}.

\bibitem{KophaziLathouwers2015}
J.~K{\'o}ph{\'a}zi, D.~Lathouwers, A space-angle {DGFEM} approach for the
  {B}oltzmann radiation transport equation with local angular refinement, J.
  Comput. Phys. 297 (2015) 637--668 (2015).
\newblock \href {https://doi.org/10.1016/j.jcp.2015.05.031}
  {\path{doi:10.1016/j.jcp.2015.05.031}}.

\bibitem{MarchukLebedev86}
G.~I. Marchuk, V.~I. Lebedev, Numerical Methods in the Theory of Neutron
  Transport, Harwood Academic Publishers, Chur, London, Paris, New York, 1986
  (1986).

\bibitem{AdamsLarsen02}
M.~L. Adams, E.~W. Larsen, Fast iterative methods for discrete-ordinates
  particle transport calculations, Progress in Nuclear Energy 40~(1) (2002)
  3--159 (2002).

\bibitem{WarsaWareingMorel2004}
J.~S. Warsa, T.~A. Wareing, J.~E. Morel, Krylov iterative methods and the
  degraded effectiveness of diffusion synthetic acceleration for
  multidimensional sn calculations in problems with material discontinuities,
  Nuclear Science and Engineering 147~(3) (2004) 218--248 (2004).
\newblock \href {https://doi.org/10.13182/NSE02-14}
  {\path{doi:10.13182/NSE02-14}}.

\bibitem{LarsenKeller}
E.~W. Larsen, J.~B. Keller, Asymptotic solution of neutron transport problems
  for small mean free paths, J. Mathematical Phys. 15 (1974) 75--81 (1974).
\newblock \href {https://doi.org/10.1063/1.1666510}
  {\path{doi:10.1063/1.1666510}}.

\bibitem{EggerSchlottbom2014}
H.~Egger, M.~Schlottbom, Diffusion asymptotics for linear transport with low
  regularity, Asymptot. Anal. 89~(3-4) (2014) 365--377 (2014).

\bibitem{Brunner05}
T.~A. Brunner, Forms of approximate radiation transport, in: Nuclear
  Mathematical and Computational Sciences: A Century in Review, A Century Anew
  Gatlinburg, American Nuclear Society, LaGrange Park, IL, 2003, tennessee,
  April 6-11, 2003 (2003).

\bibitem{Agoshkov98}
V.~Agoshkov, Boundary Value Problems for Transport Equations, Modeling and
  Simulation in Science, Engineering and Technology, Birkh\"auser, Boston, 1998
  (1998).

\bibitem{EggerSchlottbom2014Lp}
H.~Egger, M.~Schlottbom, An {$L^p$} theory for stationary radiative transfer,
  Appl. Anal. 93~(6) (2014) 1283--1296 (2014).
\newblock \href {https://doi.org/10.1080/00036811.2013.826798}
  {\path{doi:10.1080/00036811.2013.826798}}.

\bibitem{Blake16}
J.~C. Blake, Domain decomposition methods for nuclear reactor modelling with
  diffusion acceleration, Ph.D. thesis, University of Bath (2016).

\bibitem{Helmberg69}
G.~Helmberg,
  \href{https://mathscinet.ams.org/mathscinet-getitem?mr=0243367}{Introduction
  to spectral theory in {H}ilbert space}, North-Holland Series in Applied
  Mathematics and Mechanics, Vol. 6, North-Holland Publishing Co.,
  Amsterdam-London; Wiley Interscience Division John Wiley \& Sons, Inc., New
  York, 1969 (1969).
\newline\urlprefix\url{https://mathscinet.ams.org/mathscinet-getitem?mr=0243367}

\bibitem{Werner00}
D.~Werner,
  \href{https://mathscinet.ams.org/mathscinet-getitem?mr=1787146}{Funktionalanalysis},
  extended Edition, Springer-Verlag, Berlin, 2000 (2000).
\newline\urlprefix\url{https://mathscinet.ams.org/mathscinet-getitem?mr=1787146}

\bibitem{Dubroca03}
B.~Dubroca, M.~Frank, A.~Klar, G.~Th\"{o}mmes, A half space moment
  approximation to the radiative heat transfer equations, ZAMM Z. Angew. Math.
  Mech. 83~(12) (2003) 853--858 (2003).
\newblock \href {https://doi.org/10.1002/zamm.200310055}
  {\path{doi:10.1002/zamm.200310055}}.

\bibitem{DautrayLions6}
R.~Dautray, J.~L. Lions, Mathematical Analysis and Numerical Methods for
  Science and Technology, Evolution Problems {II}, Vol.~6, Springer, Berlin,
  1993 (1993).

\end{thebibliography}
\end{document}